\newcommand{\mygraphic}[1]{\includegraphics[height=#1]{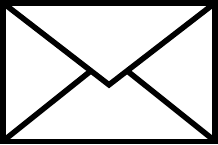}}
\newcommand{\myenv}{(\raisebox{0pt}{\mygraphic{.6em}})}
\newcommand{\R}[1]{\mathbb{R}^{#1}}
\renewcommand{\S}[1]{\mathbb{S}^{#1}}
\newcommand{\T}{\mathbb{T}}
\newcommand{\ba}{\mathbf{a}}
\newcommand{\bb}{\mathbf{b}}
\newcommand{\be}{\mathbf{e}}
\newcommand{\bef}{\mathbf{f}}
\newcommand{\bF}{\mathbf{F}}
\newcommand{\bM}{\mathbf{M}}
\newcommand{\bN}{\mathbf{N}}
\newcommand{\bT}{\mathbf{T}}
\newcommand{\bv}{\mathbf{v}}
\newcommand{\bx}{\mathbf{x}}
\newcommand{\bz}{\mathbf{z}}
\newcommand{\btau}{\boldsymbol{\tau}}
\newcommand{\bomega}{\boldsymbol{\omega}}
\newcommand{\fLie}{\mathfrak{Lie}}
\newcommand{\fse}{\mathfrak{se}}
\newcommand{\cL}{\mathcal L}
\newcommand{\cM}{\mathcal M}
\newcommand{\cP}{\mathcal P}
\newcommand{\cS}{\mathcal S}
\newcommand{\cZ}{\mathcal Z}
\newcommand{\scrL}{\mathscr{L}}
\newcommand{\fg}{\mathfrak{g}}
\newcommand{\de}{\mathrm d}
\newcommand{\sh}{\mathrm{sh}}
\newcommand{\scp}[2]{\left\langle#1,#2\right\rangle}
\renewcommand{\geq}{\geqslant}
\newcommand{\average}{{\mathchoice {\kern1ex\vcenter{\hrule
height.4pt width 8pt depth0pt}
\kern-11pt} {\kern1ex\vcenter{\hrule height.4pt width 4.3pt
depth0pt} \kern-7pt} {} {} }}
\renewcommand{\vec}[2]{\left(\begin{array}{c}
                    #1 \\
                    #2
                    \end{array}\right)}
\newcommand{\mat}[4]{\left[\begin{array}{cc}
                    #1 & #2 \\
                    #3 & #4
                    \end{array}\right]}
\mathchardef\emptyset="001F
\numberwithin{equation}{section}
\newtheorem{defin}{Definition}[section]
\newtheorem{remark}[defin]{Remark}
\newtheorem{theorem}[defin]{Theorem}
\newtheorem{proposition}[defin]{Proposition}
\title[Controllability of the 3D $N$-link swimmer]{The $N$-link swimmer in three dimensions: \\controllability and optimality results}%
\author{Roberto Marchello}
\address[R.~Marchello]{Dipartimento di Scienze Matematiche ``G.~L.~Lagrange'', Politecnico di Torino, Corso Duca degli Abruzzi, 24, 10129 Torino, Italy}
\email{s277438@studenti.polito.it}
\author{Marco Morandotti}
\address[M.~Morandotti]{Dipartimento di Scienze Matematiche ``G.~L.~Lagrange'', Politecnico di Torino, Corso Duca degli Abruzzi, 24, 10129 Torino, Italy}
\email{marco.morandotti@polito.it}
\author{Henry Shum}
\address[H.~Shum]{Department of Applied Mathematics, University of Waterloo, 200 University Avenue West, Waterloo, ON, Canada N2L 3G1}
\email{henry.shum@uwaterloo.ca}
\author{Marta Zoppello}
\address[M.~Zoppello~\myenv]{Dipartimento di Scienze Matematiche ``G.~L.~Lagrange'', Politecnico di Torino, Corso Duca degli Abruzzi, 24, 10129 Torino, Italy}
\email{marta.zoppello@polito.it}
\date{\today}
\subjclass[2010]{
 93B05   	
(76Z10,	
70Q05, 
93C10, 
49J15) 
}
\keywords{motion in viscous fluids, micro-swimmers, resistive force theory, controllability, optimal control problems.}
\begin{document}

\begin{abstract}
The controllability of a fully three-dimensional $N$-link swimmer is studied. 
After deriving the equations of motion in a low Reynolds number fluid by means of Resistive Force Theory, the controllability of the minimal $2$-link swimmer is tackled using techniques from Geometric Control Theory.
The shape of the $2$-link swimmer is described by two angle parameters. It is shown that the associated vector fields that govern the dynamics generate, via taking their Lie brackets, all six linearly independent directions in the configuration space; every direction and orientation can be achieved by operating on the two shape variables.    The result is subsequently extended to the $N$-link swimmer.      Finally, the minimal time optimal control problem and the minimisation of the power expended are addressed and a qualitative description of the  optimal strategies is provided.
\end{abstract}

\maketitle

\tableofcontents


\section{Introduction} \label{sec:intro}
The swimming motion of microorganisms in viscous fluid at low Reynolds number has been studied mathematically since the 1950s \cite{Hancock1953,Taylor1951}. There has recently been growing interest in understanding the behaviour of simple model swimmers due to the potential to manufacture such microrobots and use them for biomedical applications \cite{Li2017,Sitti2015}. For practical reasons, it can be beneficial for a proposed robotic swimmer to be as simple as possible while achieving full controllability. Here, we define swimming to be the translational and rotational motion of the swimmer in quiescent fluid due to changes in shape of the swimmer's body; by controllability we mean the ability of prescribing the shape changes in order to steer the swimmer from a given initial configuration (\emph{i.e.}, position and orientation) to a given final one. 
We neglect gravity, assuming that the swimmer is neutrally buoyant, and in view of proposing a model for a minimal swimmer, other net forces and torques acting on the body are not considered.

It is well known for swimmers in Stokes flow that if the body undergoes a shape change that is subsequently reversed, then the swimmer would return to its original position and orientation. This result, stated by Purcell \cite{Purcell1977}, is known as the Scallop Theorem. In particular, a ``scallop'' consisting of two rigid links joined by a hinge that can open and close will not achieve any net displacement by repeatedly opening and closing its hinge. Purcell proposed that at least three links, connected by two hinges, are necessary to achieve a net displacement with periodic shape changes. This model is commonly referred to as Purcell's (planar) $3$-link swimmer, and has been shown to be controllable in two-dimensional space \cite{GMZ,Kadam2016}.

If the $3$-link swimmer is twisted so that the axes of rotation for the two hinges are perpendicular to one another, then the swimmer is no longer planar in configuration. While this variant still has only two hinges, and therefore two degrees of freedom for the shape, it was shown that this swimmer is controllable in three-dimensional space \cite{root}.

In the present work, we consider a $2$-link swimmer that has a joint with two angular degrees of freedom. This joint can be thought of as a hinge whose axis can rotate about the axis of the first link. Alternatively, this corresponds to the non-planar $3$-link swimmer in the limit that the length of the central link vanishes so that the two perpendicular hinges are next to each other. 

Note that there is a fundamental difference between the $2$-link swimmer with two degrees of motion and the non-planar $3$-link swimmer. It is clear that opening or closing either hinge changes the shape of the $3$-link swimmer. Without the central link, however, one of the hinges simply rotates a link about its axis. The shape appears indistinguishable since each link is assumed to be a cylinder with rotational symmetry. Nevertheless, we show that the $2$-link swimmer can achieve arbitrary displacements and rotations in three-dimensional space. This motion requires consideration of the viscous torque due to rotation of a link about its axis. Including this torque in the model enables the swimmer to rotate despite its shape appearing stationary due to symmetry of the cylindrical link.

\smallskip

Our $2$-link swimmer consists of a segment hinged to a thin cylindrical rod which is directed along the positive $z$-axis; the hinge is located at the origin of the co-moving reference frame, so that the shape of the swimmer is described by a point in the unit sphere $\S2$, identifying the direction of the segment (link $2$) with respect to the thin cylinder (link $1$), see Figure~\ref{fig:2link}. 
The shape parameters of the $2$-link swimmer are therefore the two angles $\vartheta$ and $\varphi$ which parametrise a point in $\S2$.
The configuration parameters are the translation $\bx\in\R3$ and rotation $R\in SO(3)$ of the change of coordinates of the co-moving frame with respect to the lab reference frame. 

By the considerations above, the only forces acting on the swimmer are the hydrodynamic ones which, due to the slenderness of the swimmer, can be accurately approximated by Resistive Force Theory \cite{GH1955}, according to which the local densities of viscous force and torque 
are linear in the components of the velocity of the swimmer which are parallel or perpendicular to the swimmer's body through suitable parallel and perpendicular drag coefficients $0<C_\parallel <C_\perp$.
Adding the viscous torque due to the rotation of link $1$ about its axis amounts to adding an extra term in the expression of the viscous torque acting on link $1$ through a torsional drag coefficient $C_\tau$ (see the expressions in \eqref{003}). 

Once the total viscous force and torque are computed, setting them equal to zero allows us to obtain the equations of motion for the swimmer. 
These are conveniently written in the form of a (nonlinear) control system, so that tools from Geometric Control Theory can be applied.
In this framework, the time changes of the shape parameters $\vartheta$ and $\varphi$ are considered as the controls $u_1,u_2$ of the system, since $\vartheta$ and $\varphi$ are the parameters that can be actuated by the swimmer to modify its shape.
Standard results and methods from Geometric Control Theory are used to prove Theorem~\ref{th:contr} ensuring controllability of the $2$-link swimmer: any given final configuration can be reached starting from any assigned initial configuration by acting on the controls $u_1,u_2$ (this is known as \emph{fiber controllability} in the sense of Definition~\ref{def:contr}(i)).
Technically, this is obtained by computing the Lie brackets of the vector fields $V_1$ and $V_2$ activated by $u_1$ and $u_2$ and showing that they generate all the possible directions of motion, thus proving that two linearly independent vectors, the $V_i$'s, generate the six-dimensional space of translations and rotations $(\bx,R)$.

Controllability for the $2$-link both ensures that the equations of motion have a unique solution (Theorem~\ref{th:exuni}) and can be easily extended to the $N$-link swimmer, providing the main result of the paper.

Controllability of the system paves the way to the study of optimal swimming strategies.
Our second result establishes the existence of an optimal solution and the qualitative characterisation of the optimal control that generates it, for two specific optimal control problems which are relevant for the applications, especially in view of possible robotic implementations. 
The minimal time optimal control problem seeks the optimal solution to move from a given configuration to another given one in the shortest possible time, whereas optimisation of the power expended deals with minimising the power expended to achieve the motion (this is useful in view in presence of limited amount of resources).
Similar optimal control problems have been tackled in \cite{DMDSM2015,GMZ,MZ} for the power expended of a filament moving on a plane, for the minimal time of a planar Purcell swimmer, and for the minimal time ad quadratic cost for a scallop subject to a switching dynamics, respectively.

\smallskip

The paper is organised as follows: in Section~\ref{sec:dyn} we describe the setting for the dynamics of the $2$-link swimmer, and we deduce the equations of motion.
In Section~\ref{sec:controllability}, we prove Theorem~\ref{th:contr} which states that the $2$-link swimmer is fiber controllable according to Definition~\ref{def:contr}(i).
We further describe how to generalise both the problem setting and the results obtained to the $N$-link swimmer in Section~\ref{sec:ext} and, in Section~\ref{sec:optimality}, we discuss some optimal control problems which are relevant in this context, namely the minimal time optimal control problem and the minimisation of the power expended. 
Finally, Section~\ref{sec:conclusions} collects an overview of the results obtained and discusses some potential perspectives.


\section{Dynamics of the $2$-link swimmer} \label{sec:dyn}
Let $\bx_t$ denote the position (with respect to the lab system) of the hinge of the $2$-link swimmer, and let $\tilde \be^{(1)}_t\coloneqq \hat\be_3$ and $\tilde\be^{(2)}_t\coloneqq(\sin\varphi_t\cos\vartheta_t,\sin\varphi_t\sin\vartheta_t,\cos\varphi_t)$ be the directions of the two links, where $\vartheta_t\in\R{}$ and $\varphi_t\in\T\coloneqq\R{}/2\pi$ are the \emph{shape parameters}.\footnote{Notice that the choice of letting  $(\vartheta_t,\varphi_t)\in\R{}\times\T$ makes the parametrization of a point on the sphere $\S{2}$ not injective. This will not affect the description of the motion of the swimmer.} 
Finally, let $\ell_i$ be the length of the $i$-th link, so that the generic point on the $i$-th link, at a distance $s\in[0,\ell_i]$ from $\bx_t$ is given by $s\tilde\be^{(i)}_t$, since in the co-moving system the hinge is located at the origin.

\tdplotsetmaincoords{60}{110}

%
\pgfmathsetmacro{\rvec}{.8}
\pgfmathsetmacro{\thetavec}{30}
\pgfmathsetmacro{\phivec}{60}
\pgfmathsetmacro{\rvect}{.3}
\pgfmathsetmacro{\thetavect}{60}
\pgfmathsetmacro{\phivect}{120}

\begin{figure}[h]
	\centering
\begin{tikzpicture}[scale=5,tdplot_main_coords]
\coordinate (O) at (0,0,0);
\draw[thin,->] (0,0,0) -- (1,0,0) node[anchor=north east]{$x$};
\draw[thin,->] (0,0,0) -- (0,1,0) node[anchor=north west]{$y$};
\draw[thin,->] (0,0,0) -- (0,0,1) node[anchor=south]{$z$};
\tdplotsetcoord{P}{\rvec}{\thetavec}{\phivec}
\tdplotsetcoord{P/2}{\rvec /2}{\thetavec}{\phivec}

\foreach \t in {10,20,...,360}
		\draw[green] ({0.03*cos(\t)},{0.03*sin(\t)},0)
      --({0.03*cos(\t)},{0.03*sin(\t)},{0.69});
	\draw[green,very thick] (0,0,0) 
		\foreach \t in {5,10,...,360}
			{--({0.03*cos(\t)},{0.03*sin(\t)},0)};
	\draw[green,very thick] (0,0,{0.69}) 
		\foreach \t in {10,20,...,360}
			{--({0.03*cos(\t)},{0.03*sin(\t)},{0.69})};

\draw[green,thick] (O) -- (P);
\draw[dashed] (O) -- (Pxy);
\draw[dashed] (P) -- (Pxy);
\draw[thick] (O) -- (Pz);
\draw [ fill ] (O) circle [ radius = 0.2 pt];
\draw [green, fill ] (P) circle [ radius = 0.2 pt];
\draw [green, fill ] (Pz) circle [ radius = 0.2 pt];
\draw [ultra thick, ->] (O) -- (P/2) node[right] {$\tilde{\mathbf{e}}_t^{(2)}$};
\draw [ultra thick, ->] (O) -- (P/2z) node[left] {$\tilde{\mathbf{e}}_t^{(1)} \equiv \hat{\mathbf{e}}_3$};
\draw [ultra thick, ->] (O) -- (0.35,0,0) node[left] {$\hat{\mathbf{e}}_1$};
\draw [ultra thick, ->] (O) -- (0,0.3,0) node[above right] {$\hat{\mathbf{e}}_2$};
\draw [thin] (O) -- (O) node[left] {$\mathbf{x}_t$};
\tdplotdrawarc{(O)}{0.2}{0}{\phivec}{anchor=north}{$\theta_t$}
\tdplotsetthetaplanecoords{\phivec}
\tdplotdrawarc[tdplot_rotated_coords]{(0,0,0)}{0.5}{0}
{\thetavec}{anchor=south west}{$\varphi_t$}
\end{tikzpicture}
\caption{Co-moving frame of the $2$-link swimmer.}
\label{fig:2link}
\end{figure}

In the lab system, the hinge located at $\bx_t$ is also rotated by a rotation matrix $R_t\in SO(3)$, so that, denoting by
\begin{equation}\label{001}
\be^{(i)}_t\coloneqq R_t\tilde\be^{(i)}_t,\qquad i\in\{1,2\}
\end{equation}
the directions of the links, the generic point $\bx^{(i)}_t(s)$ on link $i$ at a distance $s$ from $\bx_t$ is identified by
\begin{equation}\label{002}
\bx^{(i)}_t(s)=\bx_t+s\be^{(i)}_t=\bx_t+sR_t\tilde\be^{(i)}_t.
\end{equation}

The densities of viscous force $\bef^{(i)}_t(s)$ and torque $\btau^{(i)}_t(s)$ are computed using \emph{Resistive Force Theory} by
\begin{subequations}\label{003}
\begin{align}
\bef^{(i)}_t(s)= &\, [(C_\parallel-C_\perp)\be^{(i)}_t\otimes\be^{(i)}_t+C_\perp I]\, \dot \bx^{(i)}_t(s), \label{003f} \\
\btau^{(1)}_t(s)= &\, s\be^{(1)}_t \times \bef^{(1)}_t(s)+C_\tau( \be^{(1)}_t\otimes\be^{(1)}_t) \bomega_t, \label{003t1}\\
\btau^{(2)}_t(s)= &\, s\be^{(2)}_t \times \bef^{(2)}_t(s), \label{003t2}
\end{align}
\end{subequations}
where $C_\parallel$ and $C_\perp$ are the parallel and perpendicular drag coefficients to each link and $C_\tau$ is the torsional drag coefficient which takes into account the fact that the first link is a cylinder, the symbol $\otimes$ denotes the dyadic product of vectors ($(\ba\otimes\bb)_{ij}\coloneqq a_ib_j$), the symbol $\times$ denotes the vector product in $\R{3}$, and a superimposed dot denotes derivation with respect to time.

In order to compute the expressions in \eqref{003}, we need to take the time derivative $\dot\bx^{(i)}_t(s)$, which, by \eqref{001} and \eqref{002}, reads
\begin{equation}\label{004}
\begin{split}
\dot \bx^{(i)}_t(s)=\dot \bx_t+s\dot\be^{(i)}_t= & \,\dot\bx_t+s\dot R_t\tilde\be^{(i)}_t+sR_t\dot{\tilde\be}^{(i)}_t \\
=&\, \dot\bx_t+s\dot R_t R^{-1}_t R_t \tilde\be^{(i)}_t+sR_t\dot{\tilde\be}^{(i)}_t \\
=&\, \dot\bx_t+s\Omega_t  \be^{(i)}_t+sR_t\dot{\tilde\be}^{(i)}_t = \dot\bx_t+s\bomega_t \times \be^{(i)}_t+sR_t\dot{\tilde\be}^{(i)}_t \,,
\end{split}
\end{equation}
where $\Omega_t$ and $\bomega_t$ are the angular matrix and the angular velocity, respectively, associated with the rotation matrix $R_t$.

Taking some elementary vector identities\footnote{$(\ba\otimes\bb)\bv=(\bb\cdot\bv)\ba$; $(\bomega\times\be)\cdot\be=0$.} into account, we obtain
\begin{equation}\label{005}
\begin{split}
\bef^{(1)}_t(s)= &\, [(C_\parallel-C_\perp)\be^{(1)}_t\otimes\be^{(1)}_t+C_\perp I]\dot \bx_t+sC_\perp \bomega_t\times \be^{(1)}_t\,, \\
\bef^{(2)}_t(s)= &\, [(C_\parallel-C_\perp)\be^{(2)}_t\otimes\be^{(2)}_t+C_\perp I]\dot \bx_t+sC_\perp \bomega_t\times \be^{(2)}_t+sC_\perp R_t\dot{\tilde\be}^{(2)}_t\,,
\end{split}
\end{equation}
where we also used that $\lvert \tilde\be^{(i)}_t\rvert\equiv1$ implies that $\tilde\be^{(i)}_t\cdot\dot{\tilde\be}^{(i)}_t=0$.
Moreover,
\begin{equation*}
\begin{split}
\btau^{(1)}_t(s)= &\, sC_\perp \be^{(1)}_t\times \dot \bx_t+s^2C_\perp[I-\be^{(1)}_t\otimes\be^{(1)}_t]\bomega_t+C_\tau( \be^{(1)}_t\otimes\be^{(1)}_t) \bomega_t\,, \\
\btau^{(2)}_t(s)= &\, sC_\perp \be^{(2)}_t\times \dot \bx_t+s^2C_\perp[I-\be^{(2)}_t\otimes\be^{(2)}_t]\bomega_t+s^2C_\perp \be^{(2)}_t\times R_t\dot{\tilde\be}^{(2)}_t\,.
\end{split}
\end{equation*}
integrating from $0$ to $\ell_i$ with respect to $s$, we obtain
\begin{equation*}
\begin{split}
\bF^{(1)}_t =&\, [(C_\parallel-C_\perp)\be^{(1)}_t\otimes\be^{(1)}_t+C_\perp I]\ell_1\dot \bx_t+\frac{\ell_1^2}{2}C_\perp \bomega_t\times \be^{(1)}_t \\
=&\, \ell_1 R_t [(C_\parallel-C_\perp)\tilde\be^{(1)}_t\otimes\tilde\be^{(1)}_t+C_\perp I] R_t^{-1} \dot \bx_t -\frac{\ell_1^2}{2}C_\perp R_t [\tilde\be^{(1)}_t\times (R_t^{-1}\bomega_t)], \\
\bF^{(2)}_t= &\, [(C_\parallel-C_\perp)\be^{(2)}_t\otimes\be^{(2)}_t+C_\perp I]\ell_2\dot \bx_t+\frac{\ell_2^2}{2}C_\perp \bomega_t\times \be^{(2)}_t+\frac{\ell_2^2}{2}C_\perp R_t\dot{\tilde\be}^{(2)}_t \\
=&\, \ell_2 R_t [(C_\parallel-C_\perp)\tilde\be^{(2)}_t\otimes\tilde\be^{(2)}_t+C_\perp I] R_t^{-1} \dot \bx_t -\frac{\ell_2^2}{2}C_\perp R_t [\tilde\be^{(2)}_t\times (R_t^{-1}\bomega_t)] +\frac{\ell_2^2}{2}C_\perp R_t\dot{\tilde\be}^{(2)}_t, \\
\bT^{(1)}_t= &\, \frac{\ell_1^2}{2}C_\perp \be^{(1)}_t\times \dot \bx_t+\frac{\ell_1^3}{3}C_\perp[I-\be^{(1)}_t\otimes\be^{(1)}_t]\bomega_t+\ell_1C_\tau( \be^{(1)}_t\otimes\be^{(1)}_t) \bomega_t\\
=&\, \frac{\ell_1^2}{2}C_\perp R_t [ \tilde\be^{(1)}_t\times (R_t^{-1} \dot \bx_t)]+\frac{\ell_1^3}{3}C_\perp R_t[I-\tilde\be^{(1)}_t\otimes\tilde\be^{(1)}_t] R_t^{-1}\bomega_t \\
& +\ell_1C_\tau R_t(\tilde{ \be}^{(1)}_t\otimes\tilde{\be}^{(1)}_t) R_t^{-1}\bomega_t, \\
\bT^{(2)}_t= &\, \frac{\ell_2^2}{2}C_\perp \be^{(2)}_t\times \dot \bx_t+\frac{\ell_2^3}{3}C_\perp[I-\be^{(2)}_t\otimes\be^{(2)}_t]\bomega_t+\frac{\ell_2^3}{3}C_\perp \be^{(2)}_t\times R_t\dot{\tilde\be}^{(2)}_t \\
=&\,  \frac{\ell_2^2}{2}C_\perp R_t [ \tilde\be^{(2)}_t\times (R_t^{-1} \dot \bx_t)]+\frac{\ell_1^3}{3}C_\perp R_t[I-\tilde\be^{(2)}_t\otimes\tilde\be^{(2)}_t] R_t^{-1}\bomega_t + \frac{\ell_2^3}{3}C_\perp R_t (\tilde\be^{(2)}_t\times \dot{\tilde\be}^{(2)}_t)
\end{split}
\end{equation*}
The total viscous force is then given by
\begin{equation}\label{008}
\bF_t=\bF^{(1)}_t+\bF^{(2)}_t= R_t\tilde K_tR_t^{-1}\dot\bx_t + R_t \tilde C_t^\top R_t^{-1}\bomega_t+R_t \tilde\bF^\sh_t
\end{equation}
and the total viscous torque by
\begin{equation}\label{011}
\bT_t=\bT^{(1)}_t+\bT^{(2)}_t= R_t \tilde C_t R_t^{-1} \dot\bx_t+ R_t \tilde J_t R_t^{-1}\bomega_t+R_t \tilde\bT^\sh_t\,,
\end{equation}
where the matrices $K_t$, $C_t$, and $J_t$ are defined by
\begin{equation}\label{009}
\begin{split}
\tilde K_t\coloneqq &\, \tilde K^{(1)}_t+\tilde K^{(2)}_t,\quad\text{with}\quad \tilde K^{(i)}_t\coloneqq [(C_\parallel-C_\perp)\tilde\be^{(i)}_t\otimes\tilde\be^{(i)}_t+C_\perp I]\ell_i \,,\\
\tilde C_t\coloneqq &\, \tilde C^{(1)}_t+\tilde C^{(2)}_t,\quad\text{with}\quad \tilde C^{(i)}_t\coloneqq \frac{\ell_i^2}{2} C_\perp \tilde E^{(i)}_t\quad\text{and $\tilde E^{(i)}_t$ such that}\;\tilde E^{(i)}_t\bv=\tilde \be^{(i)}_t\times\bv, \\
\tilde J_t\coloneqq &\, \tilde J^{(1)}_t+\tilde J^{(2)}_t,\quad\text{with}\quad \tilde J^{(1)}_t\coloneqq \frac{\ell_1^3}{3}C_\perp[I-\tilde\be^{(1)}_t\otimes\tilde\be^{(1)}_t]+ \ell_1C_\tau \tilde\be^{(1)}_t\otimes\tilde\be^{(1)}_t\quad\text{and}\\
&\phantom{ \tilde J^{(1)}_t+\tilde J^{(2)}_t,\quad\text{with}}\quad  \tilde J^{(2)}_t\coloneqq \frac{\ell_2^3}{3}C_\perp[I-\tilde\be^{(2)}_t\otimes\tilde\be^{(2)}_t],
\end{split}
\end{equation}
and the viscous force and torque due to the shape deformation are
\begin{equation*}
\tilde\bF_t^\sh\coloneqq \frac{\ell_2^2}{2} C_\perp  \dot{\tilde\be}_t^{(2)}\qquad\text{and}\qquad \tilde\bT_t^\sh\coloneqq \frac{\ell_2^3}{3}C_\perp  \tilde E^{(2)}_t \dot{\tilde\be}^{(2)}_t\,.
\end{equation*}
Expressions \eqref{008} and \eqref{011} can be written together in matricial form as
\begin{equation}\label{012}
\!\!\!\!
\vec{\bF_t}{\bT_t}=\mat{R_t}{0}{0}{R_t}\mat{\tilde K_t}{\tilde C_t^\top}{\tilde C_t}{\tilde J_t}\mat{R_t^{-1}}{0}{0}{R_t^{-1}}\vec{\dot\bx_t}{\bomega_t}+\mat{R_t}{0}{0}{R_t}\vec{\tilde\bF^\sh_t}{\tilde\bT^\sh_t}.
\end{equation}
The matrix
\begin{equation}\label{013}
\tilde \cM_t\coloneqq \mat{\tilde K_t}{\tilde C_t^\top}{\tilde C_t}{\tilde J_t}
\end{equation}
is known in the literature as the \emph{grand resistance matrix}. 
It is a $6\times 6$ symmetric (see \eqref{009}) and positive-definite (see \cite{HappelBrenner}) 
matrix.

Suppose that the two links are of equal lengths, $l_1=l_2\eqqcolon L$.
Listing also $\dot\varphi_t$ and $\dot\vartheta_t$ in the state of the system, and setting \eqref{012} equal to zero (this is sometimes called the \emph{self-propulsion constraint}), we have
\begin{equation}\label{014}
\left(
\begin{array}{c}
R_t^{-1}\dot\bx_t\\
R_t^{-1}\bomega_t\\
\dot\varphi_t\\
\dot\vartheta_t
\end{array}
\right)= V_1(\vartheta_t,\varphi_t)u_1+V_2(\vartheta_t,\varphi_t)u_2\,,
\end{equation}
where
\begin{equation}\label{021}
\!\!\!\! V_1\coloneqq 
\left(\begin{array}{c}
\!\! \tilde \cM_t^{-1}\left(
\begin{array}{c}
-\frac{L^2}2 C_\perp\cos\vartheta_t\cos\varphi_t\\
-\frac{L^2}2 C_\perp\sin\vartheta_t\cos\varphi_t\\
\frac{L^2}2 C_\perp\sin\varphi_t\\
\frac{L^3}3 C_\perp\sin\vartheta_t\\
-\frac{L^3}3 C_\perp\cos\vartheta_t\\
0
\end{array}\right)\\
1\\
0
\end{array}
\right),\,
V_2\coloneqq
\left(
\begin{array}{c}
\!\! \tilde \cM_t^{-1}\left(\begin{array}{c}
\frac{L^2}2 C_\perp\sin\vartheta_t\sin\varphi_t\\
-\frac{L^2}2 C_\perp\cos\vartheta_t\sin\varphi_t\\
0\\
\frac{L^3}6 C_\perp\cos\vartheta_t\sin2\varphi_t\\
\frac{L^3}6 C_\perp\sin\vartheta_t\sin2\varphi_t\\
-\frac{L^3}3 C_\perp\sin^2\varphi_t
\end{array}\right)\\
0\\
1
\end{array}
\right)
\end{equation} 
and $u_1,u_2\colon[0,T]\to\R{}$ are measurable functions.
By straightforward computations we have
\begin{equation}\label{015}
V_1=\left(
\begin{array}{c}
\displaystyle \frac{LC_\perp\cos\vartheta_t\sin^2\frac{\varphi_t}{2}}{2(C_\perp+C_\parallel+(C_\parallel-C_\perp)\cos\varphi_t)}\\
\displaystyle \frac{LC_\perp\sin\vartheta_t\sin^2\frac{\varphi_t}{2}}{2(C_\perp+C_\parallel+(C_\parallel-C_\perp)\cos\varphi_t)}\\
\displaystyle \frac{LC_\perp\sin\varphi_t}{4(C_\perp+C_\parallel+(C_\parallel-C_\perp)\cos\varphi_t)}\\
\displaystyle \frac{\sin\vartheta_t}{2}\\
\displaystyle -\frac{\cos\vartheta_t}{2}\\
0\\
1\\
0
\end{array}
\right)\quad\text{and}
\end{equation}
\begin{equation}\label{016N}
V_2=\left(
\begin{array}{c}
\displaystyle \frac{-24 C_\tau  L \sin\vartheta_t  \sin ^2\frac{\varphi_t }{2} \sin \varphi _t}{36 C_\tau  \cos \varphi_t -45 C_\tau +\cos 2 \varphi_t  \left(2 C_\perp  L^2-15 C_\tau \right)-2 C_\perp  L^2}\\
\displaystyle \frac{48 C_\tau  L \cos \vartheta_t  \sin ^3\frac{\varphi_t }{2} \cos \frac{\varphi_t }{2}}{36 C_\tau  \cos \varphi_t -45 C_\tau +\cos 2 \varphi_t  \left(2 C_\perp  L^2-15 C_\tau
   \right)-2 C_\perp  L^2}\\
   0\\
   \displaystyle \frac{-3 C_\tau  \cos \vartheta_t  (5 \sin 2 \varphi_t -6 \sin \varphi_t )}{36 C_\tau  \cos \varphi_t -45 C_\tau +\cos 2 \varphi_t  \left(2 C_\perp  L^2-15 C_\tau \right)-2 C_\perp  L^2}\\
    \displaystyle \frac{-3 C_\tau  \sin \vartheta_t  (5 \sin 2 \varphi_t -6 \sin \varphi_t )}{36 C_\tau  \cos \varphi_t -45 C_\tau +\cos 2 \varphi_t  \left(2 C_\perp  L^2-15 C_\tau \right)-2 C_\perp  L^2}\\
    \displaystyle \frac{4L^2C_\perp\sin^2\varphi_t}{36 C_\tau  \cos \varphi_t -45 C_\tau +\cos 2 \varphi_t  \left(2 C_\perp  L^2-15 C_\tau \right)-2 C_\perp  L^2}\\
    0\\
    1
\end{array}
\right).
\end{equation}
The following theorem, whose proof is a byproduct of the controllability Theorem~\ref{th:contr}, holds.
\begin{theorem}\label{th:exuni}
Let $(\bar\bx,\bar R)\in\R{3}\times SO(3)$ be given.
There exists a unique absolutely continuous solution $(\bx_t,R_t)\colon[0,+\infty)\to \R{3}\times SO(3)$ to the Cauchy problem for \eqref{014} with initial condition $(\bx_0,R_0)=(\bar\bx,\bar R)$, for any controls $u_1,u_2\in L^\infty(0,+\infty)$.
\end{theorem}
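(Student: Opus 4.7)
My plan is to recast \eqref{014} as a standard Carathéodory ordinary differential equation on the product manifold $M\coloneqq\R{3}\times SO(3)\times\R{}\times\T$ and then invoke the classical existence--uniqueness theorem for Carathéodory systems, followed by a global extension argument based on the boundedness of the vector fields. Concretely, denote by $V_i^{\bx},V_i^{\bomega}\in\R{3}$ and $V_i^{\varphi},V_i^{\vartheta}\in\R{}$ the first three, middle three, and last two components of $V_i$ in \eqref{021}. Premultiplying the first three rows of \eqref{014} by $R_t$ and the second three rows by $R_t$ composed with the hat map $\widehat{\,\cdot\,}\colon\R{3}\to\mathfrak{so}(3)$, the Cauchy problem takes the equivalent form
\begin{equation*}
\dot\bx_t=R_t\bigl[V_1^{\bx}u_1(t)+V_2^{\bx}u_2(t)\bigr],\quad \dot R_t=R_t\,\widehat{V_1^{\bomega}u_1(t)+V_2^{\bomega}u_2(t)},\quad \dot\varphi_t=u_1(t),\quad \dot\vartheta_t=u_2(t),
\end{equation*}
with initial datum $(\bar\bx,\bar R,\bar\varphi,\bar\vartheta)$ and right-hand side depending on the state through $(R_t,\vartheta_t,\varphi_t)$.

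Next I would verify the Carathéodory hypotheses. Measurability in $t$ is immediate since the RHS is affine in $u_1,u_2\in L^\infty(0,+\infty)$. Smoothness in the state variables reduces to smoothness of the maps $(\vartheta,\varphi)\mapsto V_i(\vartheta,\varphi)$, which follows from the explicit expressions \eqref{015}--\eqref{016N}: the numerators are real-analytic in $(\vartheta,\varphi)$ and the denominators equal (up to constants) $\det\tilde\cM_t$, which by \eqref{013} is strictly positive and uniformly bounded away from zero on the compact set $\T\times\T$, since $\tilde\cM_t$ is symmetric positive definite. Hence $V_1,V_2$ are smooth and $2\pi$-periodic in both $\vartheta$ and $\varphi$, in particular uniformly bounded on $M$ together with all their derivatives. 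On any bounded region in $(\bx,R)$ the right-hand side is therefore Lipschitz in the state, with Lipschitz constant proportional to $\norma{u_1}{\infty}+\norma{u_2}{\infty}$. The Carathéodory--Picard--Lindelöf theorem then yields a unique maximal absolutely continuous solution on some interval $[0,\tau^*)$.

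Finally I would rule out finite-time blow-up to obtain global existence. The equations for the shape variables give $|\vartheta_t|\leq|\bar\vartheta|+t\norma{u_2}{\infty}$ and $\varphi_t\in\T$, so the shape stays in a compact set on any finite interval; by the periodicity noted above, $V_1$ and $V_2$ are bounded by some constant $M>0$ independent of $t$. Since $SO(3)$ is compact and $\dot R_t$ is uniformly bounded, $R_t$ remains in $SO(3)$ for all $t$; moreover $|\dot\bx_t|\leq M(\norma{u_1}{\infty}+\norma{u_2}{\infty})$ gives at most linear growth of $\bx_t$. Hence $\tau^*=+\infty$ and the solution is defined on $[0,+\infty)$.

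The only non-routine point is the global smoothness of $V_1,V_2$ on the full shape space $\R{}\times\T$, in particular that no spurious singularities arise from the spherical parametrisation or from inverting $\tilde\cM_t$. This is where the theorem truly is a byproduct of Theorem~\ref{th:contr}: the Lie-bracket computations carried out there rely on, and implicitly establish, the smoothness and non-degeneracy of $\tilde\cM_t^{-1}$ throughout the configuration manifold, after which the Cauchy theory above is completely standard.
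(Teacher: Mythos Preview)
Your argument is correct and self-contained, but it is not the route the paper takes. The paper does not carry out any Carath\'eodory analysis at all: in Step~1 of the proof of Theorem~\ref{th:contr} the system \eqref{014} is recognised as a control system on the Lie group $SE(3)$ in the sense of Definition~\ref{def:controlsystem}, and existence and uniqueness of an absolutely continuous solution for $u_1,u_2\in L^\infty$ is then obtained by a direct citation of \cite[Lemma~2.1]{JS} (Jurdjevic--Sussmann). That lemma packages precisely the global existence/uniqueness theory for left-invariant affine-in-control systems on a Lie group, so the ``byproduct'' the paper refers to is really just the observation that \eqref{014} has the structural form \eqref{000_4}. Your approach instead unpacks this into an explicit Carath\'eodory--Picard--Lindel\"of argument plus a no-blow-up step, which has the advantage of being elementary and of making transparent exactly which features (periodicity of $V_1,V_2$ in the shape variables, compactness of $SO(3)$, linear growth of $\bx_t$) drive global existence. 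One small correction to your final paragraph: the non-degeneracy of $\tilde\cM_t^{-1}$ is not established by the Lie-bracket computations in Theorem~\ref{th:contr}; it is an input, taken from the positive-definiteness of the grand resistance matrix cited in \eqref{013} from \cite{HappelBrenner}, and you already invoked it correctly earlier in your argument.
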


\section{Controllability}\label{sec:controllability}

\subsection{Preliminaries}\label{sec:prel}

In this subsection we present the basic notions about control systems on Lie groups.
We use their properties in order to state the controllability results for the $2$-link swimmer in Subsection~\ref{sec:contrthm}.

Let $G$ be an $n$-dimensional matrix Lie group 
and let  $\cS$ be an $m$-dimensional parallelizable manifold (see \cite[page 160]{BishopGoldberg});
we call $\bM\coloneqq G\times \cS$ the \emph{configuration space}, whose generic element is $\bz\coloneqq (g,s)$.
\begin{defin}\label{def:controlsystem}
A \emph{nonlinear control system on $G$} is an ODE of the form
\begin{equation}\label{000}
\dot\bz=\begin{pmatrix}
\dot g \\ \dot s
\end{pmatrix} = \begin{pmatrix}
g \xi(s,u) \\
u
\end{pmatrix},
\end{equation}
where $\xi$ is a map from the tangent space $T\cS$ to the Lie algebra $\fg$ of $G$ which is linear in the fibers, i.e.,
$$\xi(s,u)=\sum_{i=1}^m \xi_i(s)u_i,\qquad\text{for some analytic (nonlinear) maps $\xi_i\colon\cS\to\fg$,\quad $i=1,\ldots,m$,}$$
and $u\colon[0,T]\to(u_1(t),\ldots,u_m(t))\in T_s\cS\simeq\R{m}$ is the vector of controls.
\end{defin}
Denoting by $\hat\be_i^{\R{m}}$ the elements of the canonical basis of $\R{m}$, system \eqref{000} can be written as
\begin{equation}\label{000_4}
\dot\bz=\sum_{i=1}^m \begin{pmatrix}
g\xi_i(s) \\
\hat \be_i^{\R{m}}
\end{pmatrix} u_i\eqqcolon \sum_{i=1}^m Z_i(s)u_i, 
\end{equation}
where $Z_i=(Z_i^G,Z_i^\cS)\colon \cS\to T_g G\times T_s \cS\simeq T_g G\times \R{m}$, for $i\in\{1,\ldots,m\}$.
\begin{defin}\label{def:equivariant}
Let $g\in G$.
A vector field $X$ on $\bM$ is \emph{equivariant} with respect to the group action
\begin{equation}\label{000_5}
\Psi_{g}\colon \bM\to\bM, \qquad \bz=(h,s)\mapsto\Psi_{g}(\bz)\coloneqq(g h,s)
\end{equation}
if, denoting by $(\cdot)_*$ the push-forward,
\begin{equation}\label{000_6}
(\Psi_{g})_* X(\bz)=X\big(\Psi_{g}(\bz)\big), \qquad\text{for $\bz=(h,s)\in\bM$}.
\end{equation}
\end{defin}
By the definition of push-forward, the left-hand side in \eqref{000_6} is $\big((D\Psi_g)(\Psi_g^{-1}(\bz))\big)\cdot X(\Psi_g^{-1}(\bz))$, where $D$ denotes the differential; since $\Psi_g$ defined in \eqref{000_5} is nothing but the left-translation by $g$ in the $G$-component of $\bz$, it turns out that 
\begin{equation*}
\big((D\Psi_g)(\Psi_g^{-1}(\bz))\big)=\begin{pmatrix}
T_e L_g & 0 \\
0 & I_m 
\end{pmatrix}=
\begin{pmatrix}
g & 0 \\
0 & I_m 
\end{pmatrix}, 
\end{equation*}
where $L_g$ is the left translation by $g\in G$ (namely, $L_g h=gh$), $T_e$ is the tangent map to the identity $e\in G$, and $I_m$ is the $m$-dimensional identity matrix.

\begin{remark}\label{remark}
The following observations are straightforward:
\begin{itemize}
\item[(i)] for any $\bar g\in G$, the vector fields $Z_i$ ($i=1,\ldots,m$) in \eqref{000_4} are equivariant with respect to the group action $\Psi_{\bar g}$ defined in \eqref{000_5};
\item[(ii)] for any $Z_i,Z_j\in T_gG\times \R{m}$ and for any $\bar g\in G$, the Lie bracket $[Z_i,Z_j]$ is equivariant with respect to the group action $\Psi_{\bar g}$.
\end{itemize}
\end{remark}
We now give the definition of fiber controllability and controllability.
\begin{defin}\label{def:contr}
The nonlinear control system \eqref{000}
\begin{itemize} 
\item[(i)] 
is said to be \emph{fiber controllable} if for any initial $(g_0,s_0)\in \bM$ and final $g_1\in G$ there exist a time $T>0$ and control inputs $u\colon[0,T]\to \R{m}$ such that $g(0)=g_0$ and $g(T)=g_1$, where $(g(t),s(t))$ is the unique solution to \eqref{000}.
\item[(ii)] 
is said to be \emph{fiber controllable at $(g_0,s_0)\in \bM$} if there exists a neighbourhood $U_{g_0}$ of $g_0\in G$ such that for each $g_1\in U_{g_0}$ there exist a time $T>0$ and control inputs $u\colon[0,T]\to \R{m}$ such that $g(0)=g_0$ and $g(T)=g_1$, where $(g(t),s(t))$ is the unique solution to \eqref{000}.
\end{itemize}
\end{defin}
It can immediately be noted that if condition (ii) in Definition~\ref{def:contr} holds for every $(g_0,s_0)\in\bM$, then condition (i) holds.
We observe that the uniqueness of solutions to \eqref{000} is granted by \cite[Lemma~2.1]{JS}.
\begin{theorem}\label{ostro}
Let us consider a control system on a Lie group $G$ of the form  \eqref{000_4}. Then
\begin{itemize}
\item[(i)] it is fiber controllable at $(g_0,s_0)\in\bM$ in the sense of Definition~\ref{def:contr}(ii) 
if 
\begin{equation}\label{condition}
\Pi_G \big(\fLie(\{Z_1,\ldots,Z_m\})_{(e,s_0)}\big)=\fg,
\end{equation} 
where $\fLie(\{Z_1,\ldots,Z_m\})$ is the Lie algebra generated by the vector fields $Z_1,\ldots,Z_m$ and $\Pi_G$ denotes the projection on the group component;
\item[(ii)] if condition \eqref{condition} hold for every $s_0\in \cS$, then it 
is fiber controllable in the sense of Definition~\ref{def:contr}(i).
\end{itemize}
\end{theorem}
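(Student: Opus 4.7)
The plan is to combine a Chow--Rashevsky--style orbit argument at the identity $e\in G$ with the equivariance noted in Remark \ref{remark} to transfer the local conclusion to an arbitrary base point. Since the controls $u_i$ in \eqref{000_4} are unconstrained in sign, both $Z_i$ and $-Z_i$ are reachable directions, so the reachable set from any $(g_0,s_0)\in\bM$ coincides with the orbit through that point under the family $\{Z_1,\ldots,Z_m\}$; by the orbit theorem of Sussmann--Stefan this orbit is an immersed submanifold of $\bM$ whose tangent space at its base point is exactly $\fLie(\{Z_1,\ldots,Z_m\})$ evaluated there.

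For part (i), I would first consider the trajectory starting at $(e,s_0)$ and let $\cO\subset \bM$ denote the corresponding orbit. The projection $\Pi_G\colon \bM\to G$, restricted to $\cO$, has differential at $(e,s_0)$ given by the linear map $\fLie(\{Z_1,\ldots,Z_m\})_{(e,s_0)}\to \fg$ induced by $\Pi_G$, which by hypothesis \eqref{condition} is surjective. The submersion theorem then implies that $\Pi_G(\cO)$ contains an open neighbourhood $U_e$ of $e$ in $G$, so for every $g_1\in U_e$ there exist $T>0$, controls $u\colon[0,T]\to\R{m}$, and some $s_1\in\cS$ such that the solution to \eqref{000} starting at $(e,s_0)$ satisfies $g(T)=g_1$ and $s(T)=s_1$.

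To handle an arbitrary starting point $(g_0,s_0)$, I would invoke Remark \ref{remark}: each $Z_i$, and hence every iterated bracket of the $Z_i$'s, is equivariant with respect to the action $\Psi_{g_0}$ defined in \eqref{000_5}. Because $\Psi_{g_0}$ is a diffeomorphism of $\bM$ that conjugates the control system with itself, it maps trajectories starting at $(e,s_0)$ to trajectories starting at $(g_0,s_0)$, and therefore sends the orbit through $(e,s_0)$ to the orbit through $(g_0,s_0)$. Its $G$-component is left translation by $g_0$, so the open set $U_e\subset G$ obtained above is carried to the open neighbourhood $U_{g_0}\coloneqq g_0U_e$ of $g_0$, whence fiber controllability at $(g_0,s_0)$ follows. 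Part (ii) is then a direct consequence of part (i) together with the observation following Definition~\ref{def:contr} that pointwise fiber controllability at every configuration implies global fiber controllability.

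The main subtlety will be the application of the submersion theorem on the immersed submanifold $\cO$: one must argue that $\cO$, though only immersed, admits a local slice near $(e,s_0)$ on which $\Pi_G$ restricts to a genuine submersion onto a chart of $G$, so that open sets in $\cO$ map to open sets of $G$. Beyond this, the argument is essentially an equivariant repackaging of the classical Chow theorem for symmetric distributions, whose orbit-theoretic formulation is well adapted to the $G$-equivariance built into the system \eqref{000_4}.
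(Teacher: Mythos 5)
Your proof is correct, but it takes a genuinely different route from the paper's. For part (i) the paper simply cites \cite[Theorem~5.9]{CMOZ2002}, which asserts that condition \eqref{condition} implies local fiber configuration accessibility for affine control systems on Lie groups, and then invokes the standard equivalence between accessibility and controllability for driftless systems; part (ii) is dispatched by observing that \eqref{condition} is independent of $g_0$. You instead give a self-contained Chow--Rashevsky/orbit argument: since the controls are unconstrained in sign, the reachable set from $(e,s_0)$ contains the whole orbit of $\{Z_1,\ldots,Z_m\}$; the Orbit Theorem (Theorem~\ref{orbit}, applicable because the $\xi_i$ in Definition~\ref{def:controlsystem} are analytic) identifies the tangent space of the orbit, so hypothesis \eqref{condition} makes $\Pi_G$ restricted to the orbit a submersion at $(e,s_0)$ and its image contains a neighbourhood of $e$; equivariance (Remark~\ref{remark}) then transports the conclusion to an arbitrary base point $(g_0,s_0)$ by left translation, exactly as the paper does implicitly. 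Two small comments: only the inclusion $T_{(e,s_0)}\cO\supseteq \fLie(\{Z_1,\ldots,Z_m\})_{(e,s_0)}$ is needed, so even the smooth Stefan--Sussmann version suffices (the analytic equality you quote holds here but is not essential); and the ``immersed submanifold'' subtlety you flag at the end is harmless, because the rank theorem is applied to the smooth map $\Pi_G|_{\cO}\colon\cO\to G$ with $\cO$ carrying its intrinsic manifold structure, and a submersion is an open map into $G$ regardless of how $\cO$ sits inside $\bM$. Your part (ii) coincides with the paper's, both resting on the observation after Definition~\ref{def:contr} (which tacitly uses connectedness of $G$ and chaining of local reachable sets). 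In terms of trade-offs, the paper's proof is shorter and delegates the geometry to an established accessibility result that also covers systems with drift, whereas yours is elementary and transparent, using only tools already stated in the paper (the Orbit Theorem and the equivariance remark) at the cost of spelling out the submersion step.
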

\begin{proof}
(i)
The proof is a straightforward application of \cite[Theorem~5.9]{CMOZ2002}, where it is proved that condition \eqref{condition} implies \emph{local fiber configuration accessibility at $(g_0,s_0)$} (see \cite[Definition~5.7]{CMOZ2002}) for affine control systems on Lie groups.
Since it is well known that for driftless systems accessibility is equivalent to controllability and that the result holds globally in time, fiber controllability at $(g_0,s_0)$ in the sense of Definition~\ref{def:contr}(ii) follows.

(ii) This is easily proved since condition \eqref{condition} is independent of $g_0$.
\end{proof}

The following statement of the Orbit Theorem can be easily derived from  \cite[Chapter~2, Theorems~1 and ~2]{jurdjevic}.
\begin{theorem}[The orbit theorem]\label{orbit}
Let $\bM$ be an analytic manifold, and let $\cZ$ be a family of analytic vector fields on $\bM$. 
Then
\begin{itemize}
\item[(a)] each orbit of $\cZ$ is an analytic submanifold of $\bM$, and 
\item[(b)] if $\bN$ is an orbit of $\cZ$, then the tangent space of $\bN$ at $\bz$ is given by $\mathfrak{Lie}_\bz(\cZ)$.
In particular, the dimension of $\mathfrak{Lie}_\bz(\cZ)$ is constant as $\bz$ varies on $\bN$.
\end{itemize}
\end{theorem}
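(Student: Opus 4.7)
The plan is to follow the classical Sussmann--Nagano strategy for proving the orbit theorem, specialised to the analytic category. Fix $\bz_0 \in \bM$ and let $\cG$ denote the pseudogroup of local analytic diffeomorphisms of $\bM$ generated by the flows $\Phi^X_t$ with $X \in \cZ$ and $t$ in the domain of the flow. The orbit $\bN$ of $\cZ$ through $\bz_0$ is the set of points $\Phi(\bz_0)$ for $\Phi \in \cG$ defined at $\bz_0$. At every $\bz \in \bN$ I would then define the subspace
$$P_\bz \coloneqq \spann\bigl\{(\Phi_* X)(\bz) : \Phi \in \cG,\ X \in \cZ\bigr\} \subseteq T_\bz \bM.$$
Tautologically $P$ is $\cG$-invariant, i.e.\ $\Phi_* P_\bz = P_{\Phi(\bz)}$, and consequently $\dim P_\bz$ is constant on $\bN$; denote this common value by $d$.

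Next I would build analytic submanifold charts for $\bN$. Choose vector fields $Y_1,\dots,Y_d$ of the form $Y_i=(\Phi_i)_*X_i$ whose values at $\bz_0$ span $P_{\bz_0}$, and consider the analytic map
$$\psi\colon (t_1,\dots,t_d)\longmapsto \Phi^{Y_1}_{t_1}\circ\cdots\circ\Phi^{Y_d}_{t_d}(\bz_0),$$
defined on a neighbourhood of the origin in $\R{d}$. Its differential at the origin sends $\hat\be_i$ to $Y_i(\bz_0)$ and is therefore injective, so the analytic inverse function theorem identifies the image of $\psi$ with an analytic $d$-dimensional submanifold $S\subseteq\bM$. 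By $\cG$-invariance, every translate $\Phi(S)$ with $\Phi\in\cG$ is again such a piece. Equipping $\bN$ with the finest topology for which all such parametrisations are continuous (the \emph{orbit topology}, generally finer than the subspace topology), one checks that the overlap maps between two such pieces are analytic, giving an atlas that realises $\bN$ as an injectively immersed analytic submanifold with $T_\bz\bN = P_\bz$. This proves (a) and identifies the tangent space of the orbit with $P$.

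It remains to verify $P_\bz=\mathfrak{Lie}_\bz(\cZ)$, which is where analyticity becomes essential. The inclusion $\mathfrak{Lie}_\bz(\cZ)\subseteq P_\bz$ follows from the identity
$$[X,Y](\bz)=\frac{\de}{\de t}\Big|_{t=0}(\Phi^X_{-t})_* Y(\bz),$$
so iterated brackets are derivatives of pushforward curves lying in $P_{(\cdot)}$, hence belong to $T_\bz\bN=P_\bz$. For the reverse inclusion, the analytic Taylor expansion
$$(\Phi^X_{-t})_* Y(\bz)=\sum_{k=0}^{\infty}\frac{t^k}{k!}(\mathrm{ad}\,X)^k Y(\bz)$$
converges in the finite-dimensional space $T_\bz\bM$, so each pushforward $(\Phi^X_{-t})_* Y(\bz)$ lies in the closure of $\mathfrak{Lie}_\bz(\cZ)$; this subspace is automatically closed since it is finite-dimensional. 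Iterating over compositions of flows yields every generator of $P_\bz$, and combined with the constancy of $\dim P_\bz$ along $\bN$, this establishes (b).

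The main obstacle is the patching step: orbits need not be embedded submanifolds (consider irrational linear flows on a torus), so the correct topology on $\bN$ is the orbit topology rather than the subspace one. Verifying that the local charts $\psi$ fit together into a genuinely analytic maximal atlas requires using $\cG$-invariance of $P$ together with the analytic dependence of flows on initial data, and checking that the resulting analytic manifold structure is independent of the chosen parametrisations.
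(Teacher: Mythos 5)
Your argument is correct in outline, but note that the paper does not actually prove this statement: Theorem~\ref{orbit} is quoted verbatim as a known result, ``easily derived from \cite[Chapter~2, Theorems~1 and~2]{jurdjevic}'', and no proof is given in the text. What you have written is the classical Sussmann--Nagano argument, which is precisely the content of the cited theorems in Jurdjevic's book, so in substance you are reproving the reference rather than diverging from the paper. Within that framework your sketch is sound: the pseudogroup $\cG$, the $\cG$-invariant distribution $P$, the flow-composition charts $\psi$ with the orbit topology, and the identification $P_\bz=\mathfrak{Lie}_\bz(\cZ)$ via the expansion of $(\Phi^X_{-t})_*Y$ are exactly the right ingredients, and you correctly isolate analyticity as the point where $P_\bz$ collapses onto $\mathfrak{Lie}_\bz(\cZ)$ (this inclusion genuinely fails in the $C^\infty$ category).

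Two places deserve a little more care if this were to be written out in full. First, the series $\sum_k \frac{t^k}{k!}(\mathrm{ad}\,X)^kY(\bz)$ only converges for small $\lvert t\rvert$; to conclude that $(\Phi^X_{-t})_*Y(\bz)\in\mathfrak{Lie}_\bz(\cZ)$ for \emph{all} $t$ in the domain of the flow you should observe that $t\mapsto(\Phi^X_{-t})_*Y(\bz)$ is an analytic curve in the finite-dimensional space $T_\bz\bM$ which lies in the linear subspace $\mathfrak{Lie}_\bz(\cZ)$ near $t=0$, hence lies in it identically by analytic continuation (apply a linear functional annihilating the subspace). The same invariance statement for the distribution, $(\Phi^X_t)_*\mathfrak{Lie}_\bz(\cZ)=\mathfrak{Lie}_{\Phi^X_t(\bz)}(\cZ)$, is what licenses the ``iteration over compositions of flows''. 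Second, in the patching step the essential fact is that every $X\in\cZ$ is tangent to each local slice $\Phi(S)$ (because $X(\bz)\in P_\bz=T_\bz\Phi(S)$), which is what guarantees that the orbit is exhausted by such slices and that the overlap maps are analytic; you mention the obstacle but this tangency is the lemma that resolves it.
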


\subsection{The controllability theorem}\label{sec:contrthm}
We are interested in studying how the shape change of our swimmer determines its spatial position and orientation in the framework of control systems on Lie groups.
We will work with $\bM=G\times\cS=SE(3)\times \big(\R{}\times\T\big)$, by posing
\begin{equation}\label{050}
g\coloneqq 
\begin{pmatrix}
R(\alpha,\beta,\gamma) & \tau \\
0 & 1
\end{pmatrix}\in SE(3)\quad\text{and $s\coloneqq(\vartheta,\varphi)\in \R{}\times\T$,}
\end{equation}
where $R(\alpha,\beta,\gamma)\in SO(3)$ and $\tau\coloneqq(x_1,x_2,x_3)^\top\in\R{3}$. 
In order to write system \eqref{000_4} in vector form, we introduce the Lie algebra isomorphism $\cL\colon\R{6}\to\fse(3)$ defined by
$$y=(y_1,\ldots,y_6)^\top\mapsto
\begin{pmatrix}
0 & -y_6 & y_5 & y_1 \\
y_6 & 0 & -y_4 & y_2 \\
-y_5 & y_4 & 0 & y_3 \\
0 & 0 & 0 & 0
\end{pmatrix}.
$$
The application of $\cL^{-1}$ to the $g$-component in \eqref{000_4} will transform it from a $4\times 4$-matrix into a vector in $\R{6}$.
Moreover, denoting by $Z^G$ and $Z^\cS$ the $G$- and $\cS$-components, respectively, of any $Z\in T_gSE(3)\times\R{2}$, Remark~\ref{remark}(ii) implies that, for any $Z_1,Z_2\in T_g SE(3)\times\R{2}$, 
\begin{equation}\label{055}
(\Psi_g^{-1})_* [Z_1,Z_2]_{Tg SE(3)\times\R{2}}= \big[(\Psi_g^{-1})_* Z_1, (\Psi_g^{-1})_* Z_2\big]_{\fse(3)\times\R{2}}.
\end{equation}
Moreover, since $\cL$ is a Lie algebra isomorphism, if $Z_i=(g\xi_i(s),\hat\be_{i}^{\R{2}})$, $i=1,2$, we can rewrite \eqref{055} as
\begin{equation*}
\begin{split}
\begin{pmatrix}
\cL^{-1} \Gamma^G \\
\Gamma^\cS
\end{pmatrix}= & \left[
\begin{pmatrix}
\cL^{-1} \big( (\Psi_g^{-1})_*Z_1\big)^G \\
\big( (\Psi_g^{-1})_*Z_1\big)^\cS
\end{pmatrix}, 
\begin{pmatrix}
\cL^{-1} \big( (\Psi_g^{-1})_*Z_2\big)^G \\
\big( (\Psi_g^{-1})_*Z_2\big)^\cS
\end{pmatrix}
\right]_{\R{8}} \\ 
= & 
\begin{pmatrix}
\cL^{-1}\big([\xi_1,\xi_2]_{\fse(3)} +(\nabla_s \xi_2)\hat\be_1^{\R{2}}-(\nabla_s \xi_1)\hat\be_2^{\R{2}}\big) \\
0\\
0
\end{pmatrix}, 
\end{split}
\end{equation*}
where we have denoted by $\Gamma$ the left-hand side in \eqref{055}.
We recall here that $[\xi_1,\xi_2]_{\fse(3)}=\xi_1\xi_2-\xi_2\xi_1$ is the commutator, for any $\xi_1,\xi_2\in\fse(3)$.

We can now state the controllability theorem for the $2$-link swimmer.
\begin{theorem}[Controllability of the $2$-link]\label{th:contr}
The $2$-link swimmer is fiber controllable in the sense of Definition~\ref{def:contr}(i) for almost any choice of the parameters $(C_\parallel, C_\perp, C_\tau, L)\in (0,+\infty)^4$.
\end{theorem}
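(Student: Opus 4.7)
The proof reduces, via Theorem~\ref{ostro}(ii), to verifying the Lie algebra rank condition \eqref{condition} at every shape $s_0=(\vartheta_0,\varphi_0)\in\R{}\times\T$. Since $V_1,V_2$ and their iterated brackets are equivariant under the left $SE(3)$-action (Remark~\ref{remark}), one may evaluate all brackets at the identity $g=e$ and work entirely in $\fse(3)\times\R{2}$. Writing $V_i=(\xi_i(s),\hat\be_i^{\R{2}})$ with $\xi_i\colon\R{}\times\T\to\fse(3)$ extracted from the first six components of \eqref{015}--\eqref{016N}, the formula derived just before the theorem specialises to
\begin{equation*}
[V_1,V_2]=\Big(\,[\xi_1,\xi_2]_{\fse(3)}+(\nabla_s\xi_2)\hat\be_1^{\R{2}}-(\nabla_s\xi_1)\hat\be_2^{\R{2}},\;0\,\Big),
\end{equation*}
whose shape component vanishes. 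An easy induction then shows that every iterated bracket of length $\geq 2$ has zero shape component, so only $V_1$ and $V_2$ contribute directly to the two shape directions and the task reduces to producing four additional independent vectors in $\fse(3)$ from nested brackets.

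My plan is to assemble a $6\times 6$ matrix $A(\vartheta_0,\varphi_0;C_\parallel,C_\perp,C_\tau,L)$ whose rows are the $\fse(3)$-projections of
\begin{equation*}
V_1,\quad V_2,\quad [V_1,V_2],\quad [V_1,[V_1,V_2]],\quad [V_2,[V_1,V_2]],\quad [V_1,[V_2,[V_1,V_2]]],
\end{equation*}
and to verify $\det A\not\equiv 0$; condition \eqref{condition} at $s_0$ is then equivalent to $\det A(s_0;\cdot)\neq 0$. If this particular selection turns out to be generically dependent, one replaces the fourth-order bracket by $[V_2,[V_2,[V_1,V_2]]]$ or goes to fifth order. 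The main obstacle is sheer computational complexity: the entries in \eqref{015}--\eqref{016N} are already cumbersome rational functions, and iterated brackets produce expressions in eight variables that realistically demand a computer algebra system.

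To establish the ``almost any choice of parameters'' clause, it is enough to exhibit a single specialisation at which $\det A\neq 0$, for instance $(\vartheta_0,\varphi_0)=(0,\pi/2)$ together with $(C_\parallel,C_\perp,C_\tau,L)=(1,2,1,1)$; the exceptional parameter set is then an analytic subvariety of $(0,+\infty)^4$ of zero Lebesgue measure. To upgrade the rank condition from the reference shape to every $s_0$, I would exploit the $SO(2)$-symmetry under rotations about the $z$-axis (translating $\vartheta_0$ leaves the rank invariant), reducing the check to $\varphi_0\in\T$, and use analyticity of $\det A$ in $\varphi_0$: its zero set is discrete, and at any exceptional $\varphi_0$ (typically the straight-swimmer configurations $\varphi_0\in\{0,\pi\}$) one either supplies an alternative higher-order bracket or invokes the Orbit Theorem~\ref{orbit} to propagate controllability from the open dense set where $\det A\neq 0$. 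Combined with Theorem~\ref{ostro}(ii), this yields the claimed fiber controllability.
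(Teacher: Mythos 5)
Your proposal follows essentially the same route as the paper: working at the identity via equivariance, taking exactly the same bracket family ($[V_1,V_2]$, $[V_1,[V_1,V_2]]$, $[V_2,[V_1,V_2]]$, $[V_1,[V_2,[V_1,V_2]]]$, which are the paper's $V_3,\dots,V_6$) at the same reference shape $(\vartheta,\varphi)=(0,\tfrac{\pi}{2})$, testing the $6\times 6$ determinant of their $\fse(3)$-projections, and extending to all shapes by analyticity and the Orbit Theorem~\ref{orbit} combined with Theorem~\ref{ostro}(ii). The only real difference is that the paper carries out the computation symbolically, obtaining $\delta=p/q$ in \eqref{pq} with $q$ never vanishing and $p$ vanishing only on a Lebesgue-null set of parameters, so your fallback options (alternative brackets, single numeric specialisation) turn out to be unnecessary.
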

\begin{proof}
The proof is divided into three steps.

\noindent \textit{Step 1.} By \eqref{050},
the equations of motion \eqref{014} can be cast in the form
\begin{equation}\label{000_1}
\begin{pmatrix}
\cL^{-1}(g^{-1}\dot g) \\
\dot s
\end{pmatrix}=V_1(s)u_1+V_2(s)u_2 \eqqcolon
\begin{pmatrix} 
\cL^{-1} \xi_1(s)\\ 
\hat\be_1^{\R{2}}
\end{pmatrix} u_1+
\begin{pmatrix}
\cL^{-1} \xi_2(s)\\ 
\hat\be_2^{\R{2}} 
\end{pmatrix} u_2.
\end{equation}
In \eqref{000_1}, we notice that $g^{-1}\dot g\in \fse(3)$; the action of $g^{-1}$ on an element $\dot g$ of the tangent space $T_g SE(3)$ can be written as 
$$\begin{pmatrix}
R^{-1}(\alpha,\beta,\gamma) & -\tau \\
0 & 1
\end{pmatrix} \dot g;$$
$V_1(s)$, $V_2(s)$, $\cL^{-1}\xi_1(s)$, $\cL^{-1}\xi_2(s)$ can be found in \eqref{021}, \eqref{015}, \eqref{016N}. 
Finally, $u_1,u_2\colon[0,T]\to\R{}$ are the \emph{control functions}.  It is a well-known fact that if $u_1$, $u_2$ are taken in $L^\infty(0,T)$, there exists a unique absolutely continuous solution to \eqref{000_1} \cite[Lemma~2.1]{JS}.

We now remark that, since $\cL$ is an isomorphism, system \eqref{000_1} is exactly a control system on the Lie group $SE(3)$ according to Definition \ref{def:controlsystem}, and thus the control vector fields are equivariant with respect to the $SE(3)$ action, as pointed out in Remark~\ref{remark}(i).

\smallskip

\noindent \textit{Step 2.} By Remark~\ref{remark}(ii) and Theorem~\ref{ostro}(i), 
to prove the fiber controllability of the system at a point $(h,s^*)$ it suffices to compute the Lie brackets of the vector fields $V_i$ at a point $(e,s^*)$ and to show that they generate any directions in the Lie algebra $\fse(3)$. 
A simple computation of these Lie brackets at the point $(e,s^*)=\big(e,(\varphi^*,\vartheta^*)\big)=\big(e,(\frac{\pi}{2},0)\big)$ yields
\begin{equation*}
V_3\coloneqq  [V_1,V_2]_{(e,s^*)}=
\left(
\begin{array}{c}
0\\
\displaystyle \frac{6 C_\tau  L \left(3 C\tau -2 C_\perp  L^2\right)}{\left(15 C_\tau +2 C_\perp  L^2\right)^2}-\frac{C_\perp  L}{4 (C_\perp +C_\parallel )}\\
0\\
\displaystyle-\frac{351 C_\tau ^2+4 C_\perp ^2 L^4+120 C_\tau  C_\perp  L^2}{2 \left(15 C_\tau +2 C_\perp  L^2\right)^2}\\
0\\
\displaystyle \frac{36 C_\tau  C_\perp  L^2}{\left(15 C_\tau +2 C_\perp  L^2\right)^2}\\
0\\
0
\end{array}
\right),
\end{equation*}
\begin{equation*}
V_4\coloneqq  [V_1,V_3]_{(e,s^*)}=
\left(
\begin{array}{c}
0\\
\displaystyle 
\frac{6 C_\tau  L(567 C_\tau ^2-4 C_\perp ^2 L^4+132 C_\tau  C_\perp  L^2)}{(15 C_\tau +2 C_\perp 
   L^2)^3}-\frac{C_\perp C_\parallel L}{2(C_\perp +C_\parallel)^2}\\
   0\\
   \displaystyle \frac{9 C_\tau  \left(927 C_\tau ^2-4 C_\perp ^2 L^4+180 C_\tau  C_\perp L^2\right)}{\left(15 C_\tau +2 C_\perp  L^2\right)^3}\\
   0\\
   \displaystyle \frac{24 C_\tau  C_\perp  L^2 \left(21 C_\tau +10 C_\perp  L^2\right)}{\left(15 C_\tau +2 C_\perp  L^2\right)^3}\\
   0\\
   0
\end{array}
\right),
\end{equation*}
\begin{equation*}
V_5\coloneqq [V_2,V_3]_{(e,s^*)}=
\left(
\begin{array}{c}
\displaystyle \frac{L \left(9 C_\tau ^2 (17 C_\perp -8 C_\parallel )+4 C_\perp ^3 L^4+12 C_\tau  C_\perp  L^2 (9 C_\perp +4 C_\parallel )\right)}{4 (C_\perp +C_\parallel ) \left(15 C_\tau +2 C_\perp  L^2\right)^2}\\
0\\
0\\
0\\
\displaystyle -\frac{351 C_\tau^2+4 C_\perp ^2 L^4+120 C_\tau  C_\perp  L^2}{2 \left(15 C_\tau +2 C_\perp  L^2\right)^2}\\
0\\
0\\
0
\end{array}
\right),  
\end{equation*}
\begin{equation*}
V_6\coloneqq [V_1,V_5]_{(e,s^*)}=
\left(
\begin{array}{c}
\displaystyle \frac{C_\perp C_\parallel L}{2(C_\perp +C_\parallel)^2}-\frac{6 C_\tau L (567 C_\tau ^2-4 C_\perp ^2 L^4+132 C_\tau  C_\perp  L^2)}{(15 C_\tau +2 C_\perp  L^2)^3} \\
0\\
0\\
0\\
\displaystyle \frac{9 C_\tau  \left(927 C_\tau ^2-4 C_\perp ^2 L^4+180 C_\tau  C_\perp  L^2\right)}{\left(15 C_\tau +2 C_\perp  L^2\right)^3}\\
0\\
0\\
0
\end{array}
\right). 
\end{equation*}

Let $v_i$ $(i=1,\ldots,6)$ be the vectors $V_i$ without the last two components, so that $(v_1|\cdots|v_6)$ is a $6\times 6$ matrix.
The computation of its determinant at the point $(\frac{\pi}{2},0)$, gives
\begin{equation}\label{020}
\delta\coloneqq\det(v_1|\cdots|v_6)_{(\frac{\pi}{2},0)}=\frac{p(C_\parallel, C_\perp, C_\tau, L)}{q(C_\parallel, C_\perp, C_\tau, L)},
\end{equation}
where $p$ 
and $q$ 
are polynomials 
whose explicit expressions are
\begin{equation}\label{pq}
\begin{split}
p= & C_\perp ^2 L^5 \big[ 27 C_\tau ^2 (11 C_\perp ^2+32 C_\perp  C_\parallel -4 C_\parallel ^2)+6 C_\tau  C_\perp  L^2 (C_\perp ^2+25 C_\perp  C_\parallel +4 C_\parallel ^2) \\
&+4 C_\perp ^3 C_\parallel L^4 \big] \cdot  \big[8 C_\perp ^4 C_\parallel L^6  -81 C_\tau^3 (61 C_\perp ^2+160 C_\perp  C_\parallel +164 C_\parallel ^2) \\
& +12 C_\tau  C_\perp ^2 L^4 (C_\perp ^2+30 C_\perp  C_\parallel +4 C_\parallel ^2)+18 C_\tau ^2 C_\perp  L^2 (18 C_\perp ^2+85 C_\perp  C_\parallel -72 C_\parallel ^2)\big],\\
q= & 32 (C_\perp +C_\parallel )^5 (15 C_\tau +2 C_\perp  L^2)^6.
\end{split}
\end{equation}
Notice that $q$ never vanishes, wheres the set $\{(C_\parallel, C_\perp, C_\tau, L)\in (0,+\infty)^4: p(C_\parallel, C_\perp, C_\tau, L)=0\}$ has zero four-dimensional Lebesgue measure.
This proves that, through the iterated Lie brackets, it is possible to generate the $6$-dimensional Lie algebra $\fse(3)$ at the point $(h,s^*)=(e,(\frac{\pi}{2},0))$. 
Fiber controllability at $(h,s^*)$ follows.

\noindent \textit{Step 3.} Recalling that for any $s\in\cS$ there exists a point $h'\in G$ such that $(h',s)$ belongs to the orbit of the point $(h,s^*)$ (since the shape variable can be steered directly by means of the control functions  $u_i$, invoking that the group action is free), and that the vector fields $V_i$ are analytic\footnote{$V_1$ and $V_2$ are analytic from \eqref{015}, \eqref{016N} and thus also their Lie brackets. }, the Orbit Theorem~\ref{orbit} states that the Lie algebra generated by the vector fields $V_1$ and $V_2$ has the same dimension at any point along the orbit. 
Finally, thanks to the equivariance of the vector fields with respect to the group action (see Remark~\ref{remark}(ii)), it is easy to see that $V_1$ and $V_2$ also generate the Lie algebra $\fse(3)$ at any points of the form $(e,s)$.
Fiber controllability follows from Theorem \ref{ostro}(ii).
\end{proof}
\begin{remark}\label{important}
By standard results on control theory \cite{Coron,trelat}, controllability is ensured with controls in $L^\infty$, thus for any final time $T<+\infty$ the $2$-link swimmer is fiber controllable by means of absolutely continuous shape parameters $(\vartheta_t,\varphi_t)\in\R{}\times\T$ for all $t\in[0,T]$ (see the $\cS$-component of \eqref{000_1}).
\end{remark}

\begin{proposition}
\label{scallop_theorem}
If $C_\tau=0$ the $2$-link swimmer is not fiber controllable, and we recover the well-known scallop theorem.
\end{proposition}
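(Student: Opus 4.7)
The plan is to show that at $C_\tau=0$ the two control vector fields commute, $[V_1,V_2]\equiv 0$, so the Lie algebra they generate has dimension at most two at every regular point; the Orbit Theorem~\ref{orbit} then forces the reachable set from any initial configuration to be an analytic submanifold of $\bM$ of dimension at most two, far below the six dimensions of $SE(3)$, and fiber controllability fails. This commutativity is the analytic form of the scallop theorem: the rigid-body displacement becomes a function only of the current shape, and no closed loop in shape space produces net motion. As a first step I would check the form of $V_2$: \eqref{015} does not depend on $C_\tau$, whereas in \eqref{016N} every entry of $V_2$ except the sixth and eighth carries an explicit factor of $C_\tau$ in its numerator; the denominator at $C_\tau=0$ collapses to $2C_\perp L^2(\cos 2\varphi-1)=-4C_\perp L^2\sin^2\varphi$, so the sixth component simplifies to $-1$ wherever $\sin\varphi\neq 0$ (the locus $\sin\varphi=0$ is singular for $\tilde\cM_t$ and must be excluded anyway). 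Hence, away from that locus,
$$V_2 = (0,0,0,0,0,-1,0,1)^\top,$$
a \emph{constant} vector field whose $\fse(3)$-component $\xi_2$ generates rotation about the body axis $\hat\be_3$ at rate $-1$.

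Next I would use the formula for the $\fse(3)$-component of the Lie bracket derived right after \eqref{055}. Since $\nabla_s\xi_2=0$, it reduces to
\begin{equation*}
[V_1,V_2]^{\fse(3)} = [\xi_1,\xi_2]_{\fse(3)} - \partial_\vartheta \xi_1.
\end{equation*}
Writing $\xi_1=(t_1,r_1)$ with $t_1=(a_1(\varphi)\cos\vartheta,a_1(\varphi)\sin\vartheta,b_1(\varphi))$ and $r_1=(\tfrac{\sin\vartheta}{2},-\tfrac{\cos\vartheta}{2},0)$ read off from \eqref{015}, and $\xi_2=(0,(0,0,-1))$, the $\fse(3)$ bracket $[(t_1,r_1),(t_2,r_2)]=(r_1\times t_2-r_2\times t_1,\,r_1\times r_2)$ gives
\begin{equation*}
[\xi_1,\xi_2]_{\fse(3)} = (-r_2\times t_1,\,r_1\times r_2) = (-a_1\sin\vartheta,\,a_1\cos\vartheta,\,0,\,\tfrac{\cos\vartheta}{2},\,\tfrac{\sin\vartheta}{2},\,0),
\end{equation*}
which is exactly $\partial_\vartheta \xi_1$. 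Hence $[V_1,V_2]\equiv 0$ on $\{\sin\varphi\neq 0\}$.

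With commutation established, $\fLie(\{V_1,V_2\})=\spann\{V_1,V_2\}$ has dimension at most two at each point of $\bM$. Theorem~\ref{orbit} then yields that the orbit of $\{V_1,V_2\}$ through any $(g_0,s_0)$ is a two-dimensional analytic submanifold of $\bM=SE(3)\times(\R{}\times\T)$; since the shape components $(1,0)$ and $(0,1)$ of $V_1,V_2$ are independent, this submanifold is locally a graph over the shape space $\cS$ and its projection to $SE(3)$ is also at most two-dimensional. In particular, only a two-parameter family of group elements is reachable from $g_0$, contradicting Definition~\ref{def:contr}(i) and proving that the swimmer is not fiber controllable. Moreover, $[V_1,V_2]=0$ says that the one-form on $\cS$ encoding the infinitesimal rigid-body displacement is exact, so every closed shape loop returns the swimmer to its initial position and orientation, which is the classical scallop theorem.

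The main technical obstacle is the identity $[\xi_1,\xi_2]_{\fse(3)}=\partial_\vartheta\xi_1$ used in the second step. It is algebraically short, but it is the precise analytic manifestation of the physical content of the proposition: when the torsional drag of link~1 is absent, any increment of $\vartheta$ is viscously indistinguishable from a rigid rotation of the whole swimmer about the symmetry axis $\hat\be_3$ of link~1, so the would-be new direction generated by the bracket of $V_1$ and $V_2$ already lies inside $\spann\{V_1,V_2\}$. Checking that the two cross products match the $\vartheta$-derivatives of the translation and rotation parts of $\xi_1$ with exactly the right signs is the only step that is not purely mechanical.
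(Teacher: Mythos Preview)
Your proof is correct and takes a genuinely different route from the paper's. Both arguments begin with the same observation: at $C_\tau=0$ the vector field $V_2$ collapses to the constant $(0,0,0,0,0,-1,0,1)^\top$, so the body motion generated by $u_2$ is exactly a rigid rotation about the symmetry axis $\hat\be_3$ at rate $-1$. From here the approaches diverge. The paper argues \emph{physically}: since $v_2=-\hat\be_6^{\R6}$, any increment of $\vartheta$ is precisely undone by the induced counter-rotation, so $\vartheta$ is not a genuine shape parameter; the swimmer never leaves the plane fixed by the initial configuration and is therefore equivalent to a planar scallop with a single shape variable $\varphi$, to which the classical scallop theorem applies. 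You instead argue \emph{analytically}: you compute the bracket using the formula after \eqref{055}, verify the identity $[\xi_1,\xi_2]_{\fse(3)}=\partial_\vartheta\xi_1$ (which I checked: the cross-product computation is correct), conclude $[V_1,V_2]\equiv0$, and invoke the Orbit Theorem to bound the orbit dimension by two, whence the $SE(3)$-projection of the reachable set is at most two-dimensional.

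Your approach is self-contained and more rigorous: the paper's reduction to the planar scallop is asserted rather than formally established, whereas your vanishing-bracket computation directly yields the dimension bound without appealing to any external result. The paper's argument, on the other hand, makes the mechanism more transparent and explains \emph{why} the bracket vanishes (the $\vartheta$-motion is a rigid symmetry), which in your write-up appears only in the closing paragraph. One minor caveat: your final sentence, that every closed shape loop returns the swimmer to its initial configuration, is literally true only for contractible loops, since $\varphi\in\T$; a loop winding once around $\T_\varphi$ corresponds to flowing along $V_1$ for time $2\pi$ and need not close in $SE(3)$. This does not affect non-controllability, but you may want to phrase the scallop-theorem remark as ``reciprocal (contractible) shape changes produce no net motion''.
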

\begin{proof}
Let us consider the the basis vectors $\hat\be_1^{\R{6}},\ldots,\hat\be_6^{\R{6}}$, whose image through $\cL$ is a basis of the Lie algebra $\fse(3)$.
Setting $C_\tau=0$ in \eqref{015} and \eqref{016N} we have that 
\begin{equation}\label{015N}
V_1=\left(
\begin{array}{c}
\displaystyle \frac{LC_\perp\cos\vartheta_t\sin^2\frac{\varphi_t}{2}}{2(C_\perp+C_\parallel+(C_\parallel-C_\perp)\cos\varphi_t)}\\
\displaystyle \frac{LC_\perp\sin\vartheta_t\sin^2\frac{\varphi_t}{2}}{2(C_\perp+C_\parallel+(C_\parallel-C_\perp)\cos\varphi_t)}\\
\displaystyle \frac{LC_\perp\sin\varphi_t}{4(C_\perp+C_\parallel+(C_\parallel-C_\perp)\cos\varphi_t)}\\
\displaystyle \frac{\sin\vartheta_t}{2}\\
\displaystyle -\frac{\cos\vartheta_t}{2}\\
0\\
1\\
0
\end{array}
\right)\quad\text{and}\quad
V_2=\left(
\begin{array}{c}
0\\
0\\
0\\
0\\
0\\
-1\\
0\\
1
\end{array}
\right).
\end{equation}
The first six components of the $V_i$'s belong to the Lie algebra $\fse(3)$ via the isomorphism $\cL$, so that we will work with the $v_i$'s defined in the proof of Theorem~\ref{th:contr}.
The expression of $V_2$ in \eqref{015N} yields that
$\hat\be_6^{\R{6}}=-v_2$.
Because of this, we do not have two real shape parameters, because $-\dot\vartheta$ coincides with one direction of the Lie algebra. As a result, whenever we move the angle theta, the system reacts with a counter-rotation by the same angle, so that the $2$-link swimmer does not leave the plane determined by the initial angle. 
In this case, the angle $\vartheta$ cannot be considered as a proper shape parameter. 
Therefore, for $C_\tau=0$, the system has only one shape parameter which makes it equivalent to a planar scallop subject to the well-known scallop theorem (see \cite{Purcell1977}).
\end{proof}

\section{The $N$-link swimmer}\label{sec:ext}
In this section, we extend the results obtained in the previous sections to the 
$N$-link swimmer. 
We consider a slender swimmer composed of a chain of $N>2$ links of length $\ell_i\geq0$ hinged at their extremities and moving in an infinite viscous fluid.
In order to avoid degeneracy, we require that there exist at least  $i,j\in\{1,\ldots,N\}$, $i\neq j$, such that $\ell_i>0$ and $\ell_j>0$.

To provide a dynamical description of the $N$-link swimmer, we follow the construction of Section~\ref{sec:dyn}: each link is described by two angles $\vartheta^{(i)}\in\R{}$, $\varphi^{(i)} \in \T$ that identify the direction of the link with respect to the co-moving frame.
The angles $\{\vartheta^{(i)}, \varphi^{(i)}\}_{i=2}^N$ are the shape parameters of the system and we will prove that the swimmer is able to move in the fluid once the time evolution of the $2N-2$ functions $t \mapsto \vartheta_t^{(i)}$ and $t \mapsto \varphi_t^{(i)}$ are given.

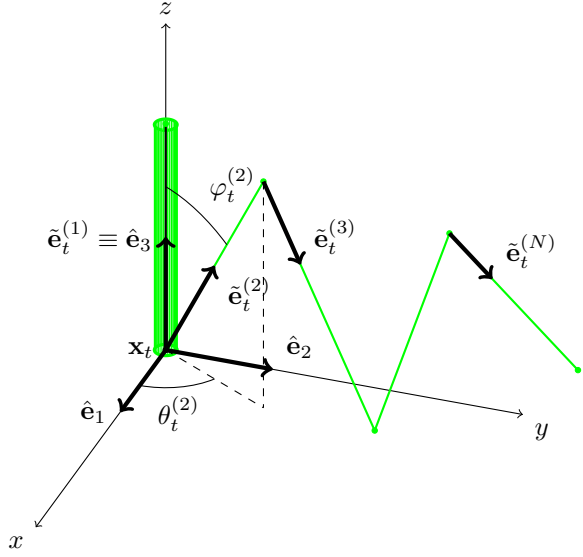
\begin{figure}[t]
	\centering
	\begin{tikzpicture}[scale=5,tdplot_main_coords]
	\coordinate (O) at (0,0,0);
	\draw[thin,->] (0,0,0) -- (1,0,0) node[anchor=north east]{$x$};
	\draw[thin,->] (0,0,0) -- (0,1,0) node[anchor=north west]{$y$};
	\draw[thin,->] (0,0,0) -- (0,0,1) node[anchor=south]{$z$};
	\tdplotsetcoord{P}{\rvec}{\thetavec}{\phivec}
	\tdplotsetcoord{P/2}{\rvec /2}{\thetavec}{\phivec}
	\foreach \t in {10,20,...,360}
		\draw[green] ({0.03*cos(\t)},{0.03*sin(\t)},0)
      --({0.03*cos(\t)},{0.03*sin(\t)},{0.69});
	\draw[green,very thick] (0,0,0) 
		\foreach \t in {5,10,...,360}
			{--({0.03*cos(\t)},{0.03*sin(\t)},0)};
	\draw[green,very thick] (0,0,{0.69}) 
		\foreach \t in {10,20,...,360}
			{--({0.03*cos(\t)},{0.03*sin(\t)},{0.69})};
	\coordinate (P1) at (0.6899, 0.8363, 0.2928);
	\coordinate (P2) at (0.8430, 1.1014, 1.0319);
	\coordinate (P3) at (1.4087, 1.6671, 1.0319);
	\coordinate (e2) at (0.3633, 0.5097, 0.5595);
	\coordinate (eN) at (1.0315, 1.2900, 1.0319);
	\draw[green,thick] (O) -- (P);
	\draw[thick] (O) -- (Pz);
	\draw[dashed] (O) -- (Pxy);
	\draw[dashed] (P) -- (Pxy);
	\draw [ fill ] (O) circle [ radius = 0.2 pt];
	\draw [green, fill ] (P) circle [ radius = 0.2 pt];
	\draw [ green, fill ] (Pz) circle [ radius = 0.2 pt];
	\draw [green, fill ] (P1) circle [ radius = 0.2 pt];
	\draw [green, fill ] (P2) circle [ radius = 0.2 pt];
	\draw [green, fill ] (P3) circle [ radius = 0.2 pt];
	\draw [ultra thick, ->] (O) -- (P/2) node[below right] {$\tilde{\mathbf{e}}_t^{(2)}$};
	\draw [ultra thick, ->] (O) -- (P/2z) node[left] {$\tilde{\mathbf{e}}_t^{(1)} \equiv \hat{\mathbf{e}}_3$};
	\draw [ultra thick, ->] (O) -- (0.35,0,0) node[left] {$\hat{\mathbf{e}}_1$};
	\draw [ultra thick, ->] (O) -- (0,0.3,0) node[above right] {$\hat{\mathbf{e}}_2$};
	\draw [thin] (O) -- (O) node[left] {$\mathbf{x}_t$};
	\draw [green,thick] (P) -- (P1);
	\draw [green,thick] (P1) -- (P2);	
	\draw [green,thick] (P2) -- (P3);	
	\draw [ultra thick, ->] (P) -- (e2) node[above right] {$\tilde{\mathbf{e}}_t^{(3)}$};
	\draw [ultra thick, ->] (P2) -- (eN) node[above right] {$\tilde{\mathbf{e}}_t^{(N)}$};
	\tdplotdrawarc{(O)}{0.2}{0}{\phivec}{anchor=north}{$\theta_t^{(2)}$}
	\tdplotsetthetaplanecoords{\phivec}
	\tdplotdrawarc[tdplot_rotated_coords]{(0,0,0)}{0.5}{0}%
	{\thetavec}{anchor=south west}{$\varphi_t^{(2)}$}
	\end{tikzpicture}
	\caption{Co-moving frame of the $N$-link swimmer.}
	\label{N-link-figure}
\end{figure}

The unit vectors that describe the directions of the links are (see Figure~\ref{N-link-figure})
\begin{equation*}
	\tilde{\mathbf{e}}_{t}^{(1)} \coloneqq \hat{\be}_3 \, , \qquad 
	\tilde{\mathbf{e}}_{t}^{(i)} \coloneqq
	\begin{pmatrix} 
	\cos \vartheta_t^{(i)} \sin \varphi_t^{(i)}\\
	\sin \vartheta_t^{(i)} \sin \varphi_t^{(i)} \\
	\cos \varphi_t^{(i)}
	\end{pmatrix} \, , \quad 
	i \in \{2, \dots, N\},
\end{equation*}
while, in the lab frame, the positions along the links, each of which is parametrized by an arc-length coordinate $s \in [0, \ell_i]$, $i\in\{1,\ldots,N\}$, are
\begin{equation}\label{4.2}
	\bx_t^{(1)} (s) =  \bx_t + R_t s \tilde{\be}_t^{(1)} \, , \qquad
	\mathbf{x}_t^{(i)} (s) = \mathbf{x}_t + R_t \bigg[ \sum_{j=2}^{i-1} \ell_j \tilde{\mathbf{e}}_{t}^{(j)} + s \tilde{\mathbf{e}}_{t}^{(i)} \bigg] \, , \quad i \in \{2,\dots,N\},
\end{equation}
where $t\mapsto\bx_t$ is the position of the joint between link $1$ and link $2$ with respect to the origin of the lab frame and $t\mapsto R_t$ is its orientation.
By \eqref{4.2} and Resistive Force Theory, we can compute the densities of viscous force and torque $\bef_t^{(i)}(s)$ and $\btau_t^{(i)}(s)$ as in \eqref{003}. 
To derive the equations of motion of the swimmer, both the entries of the grand resistance matrix $\tilde \cM_t$ and the viscous force and torque  $\tilde{\mathbf{F}}_t^{\mathrm{sh}}$ and $\tilde{\mathbf{T}}_t^{\mathrm{sh}}$ due to the shape change must be computed.
The block entries of the grand resistance matrix are
\begin{equation}\label{43}
	\tilde{K}_t \coloneqq \sum_{i=1}^{N} \tilde{K}_t^{(i)} \, , \qquad
	\tilde{C}_t \coloneqq 
	\sum_{i=1}^N \tilde{C}_t^{(i)} \, , \qquad
	\tilde{J}_t \coloneqq 
	\sum_{i=1}^N \tilde{J}_t^{(i)}
\end{equation}
where $\tilde{K}_t^{(i)}$, $\tilde{C}_t^{(i)}$ for $i=1,\ldots,N$, are given by
\begin{equation}\label{44}
\begin{split}
	\tilde{K}_t^{(i)}  \coloneqq & [(C_\parallel-C_\perp)\tilde\be^{(i)}_t\otimes\tilde\be^{(i)}_t+C_\perp I]\ell_i \,, \\
	\tilde{C}_t^{(i)}  \coloneqq & [(C_\parallel-C_\perp)\tilde\be^{(i)}_t\otimes\tilde\be^{(i)}_t+C_\perp I]\ell_i \bigg(\sum_{j=2}^{i-1} \ell_j \tilde{E}_t^{(j)} \bigg) + \frac{\ell_i^2}{2} C_\perp \tilde{E}^{(i)}_t  \,, 
	\end{split}
	\end{equation}
and 
\begin{equation*}
	\begin{split}
	\tilde{J}_t^{(1)}  \coloneqq &  \frac{\ell_1^3}{3} C_\perp \big[ I - \tilde{\be}_t^{(1)} \otimes \tilde{\be}_t^{(1)}\big] +\ell_1C_\tau \tilde{\be}_t^{(1)} \otimes \tilde{\be}_t^{(1)},\\
	\tilde{J}_t^{(i)}  \coloneqq & - \ell_i (C_\parallel - C_\perp) \bigg(\sum_{j=2}^{i-1} \ell_j \tilde{E}_t^{(j)} \bigg) ( \tilde{\be}_t^{(i)} \otimes \tilde{\be}_t^{(i)} ) \bigg(\sum_{j=2}^{i-1} \ell_j \tilde{E}_t^{(j)} \bigg) \\
	& - \ell_i C_\perp \bigg(\sum_{j=2}^{i-1} \ell_j \tilde{E}_t^{(j)} \bigg)^2 - \frac{\ell_i^2}{2} C_\perp \tilde{E}_t^{(i)} \bigg(\sum_{j=2}^{i-1} \ell_j \tilde{E}_t^{(j)} \bigg)  \\
	& - \frac{\ell_i^2}{2} C_\perp \bigg(\sum_{j=2}^{i-1} \ell_j \tilde{E}_t^{(j)} \bigg) \tilde{E}_t^{(i)}  + \frac{\ell_i^3}{3} C_\perp \big[ I - \tilde{\be}_t^{(i)} \otimes \tilde{\be}_t^{(i)}\big] \,,\quad\text{for}\quad i=2,\ldots N.
\end{split}
\end{equation*}
In the expressions above, the matrices $\tilde E_t^{(i)}$ represent the vector product $\tilde\be_t^{(i)}\times$, as in \eqref{009}.
The expression of the grand resistance matrix $\tilde \cM_t$ given in \eqref{013} still holds, using the formulas for the blocks in \eqref{43} and \eqref{44}.   
The vectors $\tilde{\mathbf{F}}_t^{\mathrm{sh}}$ and $\tilde{\mathbf{T}}_t^{\mathrm{sh}}$ are 
\begin{equation}\label{45}
\begin{split}
	\tilde{\mathbf{F}}_t^{\mathrm{sh}}  \coloneqq & \sum_{i=2}^{N} \bigg[ [(C_\parallel-C_\perp)\tilde\be^{(i)}_t\otimes\tilde\be^{(i)}_t+C_\perp I]\ell_i \bigg(\sum_{j=2}^{i-1} \ell_j \dot{\tilde{\be}}_t^{(j)} \bigg) + \frac{\ell_i^2}{2} C_\perp \dot{\tilde{\be}}_t^{(i)}\bigg] \\
	\tilde{\mathbf{T}}_t^{\mathrm{sh}}  \coloneqq & \sum_{i=2}^N \bigg[ \ell_i \bigg(\sum_{j=2}^{i-1} \ell_j \tilde{E}_t^{(j)} \bigg)[(C_\parallel-C_\perp)\tilde\be^{(i)}_t\otimes\tilde\be^{(i)}_t+C_\perp I] \bigg(\sum_{j=2}^{i-1} \ell_j \dot{\tilde{\be}}_t^{(j)} \bigg)  \\
	& + \frac{\ell_i^2}{2} C_\perp \tilde{E}_t^{(i)} \bigg(\sum_{j=2}^{i-1} \ell_j \dot{\tilde{\be}}_t^{(j)} \bigg) + \frac{\ell_i^2}{2} C_\perp \bigg(\sum_{j=2}^{i-1} \ell_j \tilde{E}_t^{(j)} \bigg) \dot{\tilde{\be}}_t^{(i)} + \frac{\ell_i^3}{3} C_\perp \tilde{E}_t^{(i)} \dot{\tilde{\be}}_t^{(i)} \bigg] \, ,
\end{split}
\end{equation}
so that, analogously to \eqref{012}, the equations of motion read
\begin{equation}\label{N012}
\mathbf{0}=
\vec{\bF_t}{\bT_t}=\tilde \cM_t\mat{R_t^{-1}}{0}{0}{R_t^{-1}}\vec{\dot\bx_t}{\bomega_t}+\vec{\tilde\bF^\sh_t}{\tilde\bT^\sh_t}.
\end{equation}
By recalling that $\tilde \cM_t$ is positive definite, and therefore invertible, \eqref{N012} can be written as
\begin{equation}\label{N001}
\begin{split}
\begin{pmatrix}
R_t^{-1}\dot\bx_t\\
R_t^{-1}\bomega_t\\
\dot\varphi_t^{(2)}\\
\dot\vartheta_t^{(2)}\\
\vdots \\
\dot\varphi_t^{(N)}\\
\dot\vartheta_t^{(N)}
\end{pmatrix}=\sum_{i=2}^N \Big[ & V_1^{(i)}\big(\{(\vartheta_t^{(j)},\varphi_t^{(j)})\}_{j=2}^N\big)u_1^{(i)} +V_2^{(i)}\big(\{(\vartheta_t^{(j)},\varphi_t^{(j)})\}_{j=2}^N\big)u_2^{(i)}\big) \Big ],
\end{split}
\end{equation}
where, for $i=2,\ldots,N$,  $V_1^{(i)}$ and $V_2^{(i)}$ are vector fields with $6+2(N-1)=2N+4$ components.
The following theorem, whose proof is a byproduct of the controllability Theorem~\ref{th:contrN}, holds.
\begin{theorem}\label{th:exuniN}
Let $(\bar\bx,\bar R)\in\R{3}\times SO(3)$ be given.
There exists a unique absolutely continuous solution $(\bx_t,R_t)\colon[0,+\infty)\to \R{3}\times SO(3)$ to the Cauchy problem for \eqref{N001} with initial condition $(\bx_0,R_0)=(\bar\bx,\bar R)$, for any controls $u_1^{(i)},u_2^{(i)}\in L^\infty(0,+\infty)$ for $i=2,\ldots,N$.
\end{theorem}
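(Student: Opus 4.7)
The plan is to mirror the proof strategy used for Theorem~\ref{th:exuni}: recast \eqref{N001} as a driftless nonlinear control system on the Lie group $SE(3)$ with shape manifold $\cS_N\coloneqq(\R{}\times\T)^{N-1}$, then invoke the standard existence-uniqueness result \cite[Lemma~2.1]{JS} for $L^\infty$ controls. Concretely, I would first identify the configuration $(\bx_t,R_t)$ with an element $g_t\in SE(3)$ as in \eqref{050}, and collect the shape variables into $s_t\coloneqq\big(\vartheta_t^{(2)},\varphi_t^{(2)},\dots,\vartheta_t^{(N)},\varphi_t^{(N)}\big)\in\cS_N$, so that \eqref{N001} fits the template of Definition~\ref{def:controlsystem} with $2(N-1)$ controls $u_1^{(i)},u_2^{(i)}$ acting through $2(N-1)$ vector fields $V_1^{(i)},V_2^{(i)}$ on $\bM_N\coloneqq SE(3)\times\cS_N$.

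The key technical point is that each $V_j^{(i)}$ is a well-defined analytic function of $s$. This requires showing that the grand resistance matrix $\tilde\cM_t$, with block entries given by \eqref{43}--\eqref{44}, is invertible and its inverse depends analytically on the shape. Invertibility is already granted in the text: $\tilde\cM_t$ is symmetric positive definite, exactly as in the $2$-link case, because it inherits this structure from the quadratic form associated with the viscous dissipation (see \cite{HappelBrenner} and the remark following \eqref{013}). Analyticity of the entries of $\tilde\cM_t$ with respect to $s$ is immediate from the trigonometric expressions for $\tilde\be_t^{(i)}$ and hence for the dyadics $\tilde\be_t^{(i)}\otimes\tilde\be_t^{(i)}$ and the skew-symmetric matrices $\tilde E_t^{(i)}$; likewise for $\tilde\bF_t^\sh$ and $\tilde\bT_t^\sh$ in \eqref{45}, which are linear in the $\dot{\tilde\be}_t^{(j)}$'s and thus analytic in $s$ with linear dependence on the controls. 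Therefore $V_j^{(i)}=\tilde\cM_t^{-1}(\cdots)$ is analytic in $s$.

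Once analyticity is established, the system \eqref{N001} is a driftless control-affine ODE whose right-hand side is Lipschitz in $(g,s)$ uniformly on compact sets of $s$ and measurable and essentially bounded in $t$ (since $u_j^{(i)}\in L^\infty$). Standard Carathéodory theory, specifically the statement recalled in \cite[Lemma~2.1]{JS}, then yields a unique absolutely continuous solution $(g_t,s_t)$. Global existence on $[0,+\infty)$ follows from the structural observation that $s_t$ is simply the integral $s_t=s_0+\int_0^t u(\sigma)\de\sigma$ of the $\cS$-component of the system (the lower block of rows in \eqref{N001} is the identity matrix acting on the controls), so $s_t$ cannot blow up in finite time, and the $SE(3)$-component evolves via left multiplication by elements of the Lie algebra $\fse(3)$ with analytic, hence locally bounded, coefficients in $s$ — preventing any finite-time escape of $g_t$.

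There is no serious obstacle: the argument is essentially the same as for Theorem~\ref{th:exuni}, and the only point that needs care is to verify that the increase in dimensionality of $\cS$ from the $2$-link case ($\cS=\R{}\times\T$) to the $N$-link case ($\cS_N=(\R{}\times\T)^{N-1}$) does not disrupt the analytic invertibility of $\tilde\cM_t$ — which it does not, as $\tilde\cM_t$ remains symmetric and positive definite by construction. The qualifier \emph{for any controls $u_1^{(i)},u_2^{(i)}\in L^\infty(0,+\infty)$} is then handled automatically by the Carathéodory framework underlying \cite[Lemma~2.1]{JS}.
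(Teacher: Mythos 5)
Your proposal is correct and follows essentially the same route as the paper: the paper treats Theorem~\ref{th:exuniN} as a byproduct of the controllability analysis, namely by recasting \eqref{N001} as a driftless control system on the Lie group $SE(3)$ with analytic control vector fields (using positive definiteness, hence analytic invertibility, of $\tilde\cM_t$) and invoking \cite[Lemma~2.1]{JS} for existence and uniqueness of absolutely continuous solutions with $L^\infty$ controls. Your additional remarks on global existence via the triangular structure (shape variables are direct integrals of the controls, the group component evolves by left-invariant dynamics) are consistent with, and make explicit, what the paper leaves implicit.
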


\begin{theorem}[Controllability of the $N$-link]\label{th:contrN}
The $N$-link swimmer is fiber controllable in the sense of Definition~\ref{def:contr} 
for almost every lengths $\ell_i$ ($i=1,\ldots,N$) of the links.
\end{theorem}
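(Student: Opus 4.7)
The plan is to reduce the $N$-link problem to the $2$-link case already settled by Theorem~\ref{th:contr}. By hypothesis there exist indices $i\neq j$ with $\ell_i,\ell_j>0$; relabeling links if necessary, I may assume $\ell_1,\ell_2>0$. Consider the subsystem of \eqref{N001} obtained by setting $u_1^{(k)}\equiv u_2^{(k)}\equiv 0$ for every $k\in\{3,\dots,N\}$ and freezing the associated shape variables at $\vartheta^{(k)}=\varphi^{(k)}=0$, so that links $3,\dots,N$ remain aligned with $\tilde\be^{(2)}$. In this subsystem only $u_1^{(2)}$ and $u_2^{(2)}$ are active; the equations of motion on $SE(3)\times\S2$ have the structural form of \eqref{014}, the only modification being that the effective vector fields $V_1^{(2)},V_2^{(2)}$ depend analytically on the whole list of lengths $(\ell_1,\dots,\ell_N)$ through the blocks \eqref{43}--\eqref{44} of the grand resistance matrix $\tilde\cM_t$.

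I would then repeat Step~2 of the proof of Theorem~\ref{th:contr} at the point $\bigl(\vartheta^{(2)},\varphi^{(2)}\bigr)=(\pi/2,0)$, with all other shape variables frozen at $0$: compute the iterated brackets $V_3\coloneqq[V_1^{(2)},V_2^{(2)}]$, $V_4\coloneqq[V_1^{(2)},V_3]$, $V_5\coloneqq[V_2^{(2)},V_3]$, $V_6\coloneqq[V_1^{(2)},V_5]$, and form the $6\times 6$ matrix $M_N$ whose columns are their $\fse(3)$-projections. Its determinant $\delta_N$ is a rational function of $\ell_1,\dots,\ell_N,C_\parallel,C_\perp,C_\tau$. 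The crux is to show that $\delta_N$ is not identically zero in the lengths. For this I would specialise to $\ell_3,\dots,\ell_N\to 0$: the corresponding contributions $\tilde K_t^{(k)},\tilde C_t^{(k)},\tilde J_t^{(k)}$ vanish and $\tilde\cM_t$ degenerates to the grand resistance matrix of a genuine $2$-link swimmer with (possibly unequal) lengths $\ell_1,\ell_2$. Since the computation in Step~2 of Theorem~\ref{th:contr} is purely algebraic, it adapts to $\ell_1\neq\ell_2$ and yields a nonzero rational function in $(\ell_1,\ell_2,C_\parallel,C_\perp,C_\tau)$, playing the role of the numerator $p$ in \eqref{pq}. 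Hence $\delta_N\not\equiv 0$ as a rational function of $(\ell_1,\dots,\ell_N)$, so its zero locus has zero $N$-dimensional Lebesgue measure.

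For any lengths outside this negligible set, $\delta_N\neq 0$ at the reference point guarantees that $\fLie\bigl(\{V_1^{(i)},V_2^{(i)}\}_{i=2}^N\bigr)$ projects onto $\fse(3)$ there; the equivariance of the control vector fields under the left $SE(3)$-action (Remark~\ref{remark}(ii)), combined with the Orbit Theorem~\ref{orbit}, propagates this rank condition to every point of the form $(e,s)$, and together with the direct controllability of the shape factor it extends to every $(h,s)\in\bM$. Fiber controllability in the sense of Definition~\ref{def:contr}(i) then follows from Theorem~\ref{ostro}(ii), exactly as in Step~3 of the proof of Theorem~\ref{th:contr}. The chief obstacle I foresee is the extension of the explicit polynomial computation of \eqref{pq} to unequal lengths $\ell_1\neq\ell_2$: the structure of the argument is unchanged, but a symbolic verification (or, more efficiently, an analyticity-based continuity argument around the equal-length locus $\ell_1=\ell_2$) is needed to confirm that the $2$-link determinant remains generically nonzero when $\ell_1\neq\ell_2$, a step which is routine but not immediate from the statement of Theorem~\ref{th:contr}.
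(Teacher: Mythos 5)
Your proposal is correct and follows essentially the same route as the paper: reduce to the $2$-link case by specialising the link lengths, use the explicit bracket determinant of Theorem~\ref{th:contr}, and then exploit analyticity of the determinant in $(\ell_1,\dots,\ell_N)$ to conclude nonvanishing for almost every lengths. The only (inessential) difference is your detour through unequal lengths $\ell_1\neq\ell_2$, which creates the verification issue you flag at the end; the paper avoids it by evaluating the analytic map $(\ell_1,\dots,\ell_N)\mapsto\delta$ directly at the single point $(L,L,0,\dots,0)$, where the computation \eqref{pq} applies verbatim --- precisely the ``analyticity-based'' fix you propose.
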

\begin{proof}
The proof follows the reasoning of that of \cite[Theorem~3.1]{GMZ}, where it is proved that the controllability of a planar $N$-link swimmer follows from that of a planar Purcell $3$-link swimmer.
In the present case, from the controllability of the $2$-link swimmer in three dimensions, together with the analyticity of the vector fields $\{V_1^{(i)},V_2^{(i)}\}_{i=2}^N$ (introduced in \eqref{N001}) with respect to the $\ell_i$'s, we will be able to deduce the controllability of the $N$-link swimmer.

More precisely, by setting $\ell_1=\ell_2\eqqcolon L$ and  $\ell_i=0$ for all $i=3,\ldots,N$, we reduce the $N$-link swimmer to a $2$-link swimmer, which can be described as in Section~\ref{sec:dyn}. 
In particular, the equations of motion \eqref{N001} read
\begin{equation*}
\begin{split}
\begin{pmatrix}
R_t^{-1}\dot\bx_t\\
R_t^{-1}\bomega_t\\
\dot\varphi_t^{(2)}\\
\dot\vartheta_t^{(2)}\\
\vdots \\
\dot\varphi_t^{(N)}\\
\dot\vartheta_t^{(N)}
\end{pmatrix}=  & W_1^{(2)}(\vartheta_t^{(2)},\varphi_t^{(2)})u_1^{(2)} +W_2^{(2)}(\vartheta_t^{(2)},\varphi_t^{(2)})u_2^{(2)},
\end{split}
\end{equation*}
where the first eight components of $W_1^{(2)}$ and $W_2^{(2)}$ are obtained from those of $V_1^{(2)}$ and $V_2^{(2)}$, respectively, and the last $2N-4$ components of both $W_1^{(2)}$ and $W_2^{(2)}$ are zero.
Clearly, the first eight components of $W_1^{(2)}$ and $W_2^{(2)}$ are precisely the $V_1$ and $V_2$ in \eqref{015}.

By Theorem~\ref{th:contr}, the vector fields $W_1^{(2)}$ and $W_2^{(2)}$ generate all of the Lie algebra $\fse(3)$.
Indeed, by taking the iterated Lie brackets of $W_1^{(2)}$ and $W_2^{(2)}$ evaluated at $(e,s^*)=(e,(\frac{\pi}{2},0))$ as we did in the proof of Theorem~\ref{th:contr}, and by constructing the corresponding $w_1^{(2)},\ldots,w_6^{(2)}$, formula \eqref{020} holds:
\begin{equation}\label{N003}
\delta=\det\big(w_1^{(2)}|\cdots|w_6^{(2)}\big)_{(\frac{\pi}{2},0)}=\frac{p(C_\parallel,C_\perp, C_\tau,L)}{q(C_\parallel,C_\perp, C_\tau,L)} 
\end{equation}
with the same $p$ and $q$ defined in \eqref{pq}, and again it does not vanish for almost any choice of modeling parameters $(C_\parallel,C_\perp, C_\tau,L)$. 
Therefore, the vector fields $W_1^{(2)}$ and $W_2^{(2)}$ generate the $6$-dimensional Lie algebra $\fse(3)$ at the point $(e,s^*)$. As done for the $2$-link swimmer, we argue that from the analyticity of the vector fields and from the Orbit Theorem~\ref{orbit}, they generate the Lie algebra $\fse(3)$ at any point $(e,s)$.
Thus fiber controllability at any points $(h,s)$ follows for a swimmer with links of lengths $\ell_1=\ell_2=L$ and $\ell_i=0$ for $i>2$.  
Taking \eqref{44} and \eqref{45} into account, it is easy to observe that the vector fields $\{V_1^{(i)},V_2^{(i)}\}_{i=2}^N$ in \eqref{N001} depend analytically on $\ell_1,\ldots,\ell_N$, so that 
\begin{equation}\label{N006}
\big(\ell_1,\ldots,\ell_N\big)\mapsto\delta=\det\big(\text{Lie brackets of $v_1^{(2)},v_2^{(2)}$}\big)_{(\frac{\pi}{2},0)}
\end{equation}
also does.
In particular, \eqref{N003} is the map in \eqref{N006} evaluated at 
\begin{equation}\label{N005}
\big(L,L,0,\ldots,0\big).
\end{equation}
Since \eqref{N003} is different from zero, the analytic map in \eqref{N006} will stay away from zero 
for almost every lengths $\ell_i$'s of the links; fiber controllability is proved. 
\end{proof}

\section{Optimal control problems}\label{sec:optimality}
In this section we tackle some optimality problems for the $2$-link swimmer whose solution we can characterise.
The generalisation to the $N$-link swimmer are easily deduced by consideration of some geometric constraints, such as non interpenetration. 
Recalling the notation of Section~\ref{sec:prel}, given $(g,s)\in G\times\cS$ the status variable, and $u\in U$, where $U\subset \R{n}$ is the compact set of controls, solving a generic control problem for \eqref{000} amounts to minimising the time integral of a Lagrangian $\scrL\colon G\times\cS\times U\to\R{+}$ under suitable constraints, namely
\begin{equation}\label{OCP}
\begin{cases}
\displaystyle \inf \bigg\{\int_0^{t_f} \scrL(g(t),s(t),u(t))\,\de t \bigg\}, \\
\text{$(g(t),s(t),u(t))\in G\times\cS\times U$ for every $t\in[0,t_f]$,} \\
\text{\eqref{000} holds for every $t\in[0,t_f]$,} \\
g(0)=g_0, g(t_f)=g_1,
\end{cases}
\end{equation}
where $t_f>0$ is a final time, and $g_0$ and $g_1$ are prescribed initial and final status of the system, respectively.

Recalling \eqref{050}, for the $2$-link swimmer we have $G=SE(3)$ and $\cS=\R{}\times \T$. 
Finally, $u(t)=(u_1(t),u_2(t))\colon[0,t_f]\to U\subset\R{2}$, with $u_1$ and $u_2$ introduced in \eqref{014}.
Therefore, we can recast the optimal control problem \eqref{OCP} for the $2$-link swimmer as 
\begin{equation}\label{OCP2}
\begin{cases}
\displaystyle \inf \bigg\{\int_0^{t_f} \scrL(g(t),\vartheta(t),\varphi(t),u_1(t),u_2(t))\,\de t \bigg\}, \\
\text{$(u_1(t),u_2(t))\in U$ for every $t\in[0,t_f]$,} \\
\text{\eqref{000_1} holds for every $t\in[0,t_f]$,} \\
g(0)=g_0, g(t_f)=g_1.
\end{cases}
\end{equation}
We can state a general result for \eqref{OCP2}.
\begin{theorem}\label{thm:opt}
Let $\scrL\colon SE(3)\times\big(\R{}\times\T\big)\times U\to\R{+}$ be smooth.
Then there exists a 
solution to the optimal control problem \eqref{OCP2}, namely, there exist an 
absolutely continuous trajectory $\bar g\colon[0,t_f]\to SE(3)$, 
absolutely continuous shape changes $(\bar\vartheta,\bar\varphi)\colon[0,t_f]\to \R{2}$, and 
bounded controls $(\bar u_1,\bar u_2)\colon[0,t_f]\to U$ such that 
\begin{equation*}
 \inf \bigg\{\int_0^{t_f} \scrL(g(t),\vartheta(t),\varphi(t),u_1(t),u_2(t))\,\de t \bigg\}=\int_0^{t_f} \scrL(\bar g(t),\bar\vartheta(t),\bar\varphi(t),\bar u_1(t),\bar u_2(t))\,\de t,
\end{equation*}
$(\bar u_1(t),\bar u_2(t))\in U$ for every $t\in[0,t_f]$, \eqref{000_1} holds for every $t\in[0,t_f]$, and $\bar g(0)=g_0$, $\bar g(t_f)=g_1$, where $g_0,g_1\in SE(3)$ are given.
\end{theorem}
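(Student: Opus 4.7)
My plan is to use the direct method of the calculus of variations in the spirit of the classical Filippov–Cesari–Tonelli existence theorem for optimal control. By the fiber controllability established in Theorem~\ref{th:contr} (adjusting the final time if necessary by a time-rescaling argument compatible with $U$), the admissible set for \eqref{OCP2} is non-empty, so the infimum $\mu \coloneqq \inf \int_0^{t_f} \scrL\, \de t$ is finite and non-negative. I would then pick a minimizing sequence $(u_1^n, u_2^n) \in L^\infty([0, t_f]; U)$ with associated trajectories $(g_n, \vartheta_n, \varphi_n)$ solving \eqref{000_1} and the boundary conditions. Because $U$ is compact, the controls are uniformly bounded in $L^\infty$; Banach--Alaoglu yields a subsequence such that $u_i^n \wsto \bar u_i$ weakly-$\ast$ in $L^\infty$, and (passing to the convex hull of $U$ for the relaxed problem if $U$ is not itself convex) $\bar u_i$ takes values in $U$ almost everywhere.

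Next, I would establish compactness of the trajectories. The shape component $\varphi_n$ lives in the compact manifold $\T$; $\vartheta_n$ stays bounded because $\dot\vartheta_n = u_2^n$ is uniformly bounded on the finite interval $[0, t_f]$; $R_n \in SO(3)$ is automatically bounded; and the translation $\bx_n$ is bounded because the right-hand side of the $\dot\bx_n$ equation in \eqref{014} is the product of the vector fields $V_1, V_2$ (evaluated on a bounded set of shape variables, hence uniformly bounded) with bounded controls. Hence the trajectories are equi-Lipschitz and equi-bounded on $[0, t_f]$, and Ascoli--Arzelà gives uniform convergence along a subsequence to some $(\bar g, \bar\vartheta, \bar\varphi)$. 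I would then pass to the limit in the integral form of \eqref{000_1}: the coefficients $V_i(\vartheta_n, \varphi_n) \to V_i(\bar\vartheta, \bar\varphi)$ uniformly by continuity, multiplied by the weakly-$\ast$ convergent $u_i^n$, so each product converges weakly-$\ast$ in $L^\infty$ and the integrals pass to the limit. This shows $(\bar g, \bar\vartheta, \bar\varphi)$ solves \eqref{000_1} with $(\bar u_1, \bar u_2)$, and uniform convergence preserves the boundary conditions $\bar g(0) = g_0$, $\bar g(t_f) = g_1$, so the limit is admissible.

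Finally, I would invoke a standard Ioffe--Tonelli lower semicontinuity theorem to pass to the limit in the cost: under the natural hypothesis that $\scrL(g, s, \cdot)$ be convex in the control variable---which covers both the minimal-time Lagrangian $\scrL \equiv 1$ and the power expended, which is quadratic in $(u_1, u_2)$ via the inversion of $\tilde{\cM}_t$ in \eqref{013}---the integral functional is sequentially lower semicontinuous along uniform convergence of the states and weak-$\ast$ convergence of the controls, giving
\begin{equation*}
\int_0^{t_f}\scrL(\bar g,\bar\vartheta,\bar\varphi,\bar u_1,\bar u_2)\,\de t \,\leq\, \liminf_{n\to\infty}\int_0^{t_f}\scrL(g_n,\vartheta_n,\varphi_n,u_1^n,u_2^n)\,\de t \,=\, \mu.
\end{equation*}
The reverse inequality is immediate because the limit is admissible, so the infimum is attained. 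The principal technical point is keeping track of the Lie-group structure when passing to the limit in the $g$-component of \eqref{000_1}: one either works in local coordinates on $SE(3)$ or uses the equivariant form $\dot g_n = g_n\,\xi(s_n,u_n)$ and observes that once $g_n \to \bar g$ uniformly and the right-hand side converges weakly-$\ast$, the identity $\dot{\bar g} = \bar g\,\xi(\bar s,\bar u)$ follows in the sense of distributions and hence, by absolute continuity, pointwise almost everywhere.
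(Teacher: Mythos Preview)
Your approach is essentially the same as the paper's: the paper's proof consists of two sentences, namely that fiber controllability (Theorem~\ref{th:contr}) guarantees the admissible set is non-empty, and then one ``applies the ideas of Filippov Theorem'' from \cite{agrachev}. What you have written is precisely a detailed unpacking of Filippov's argument---minimizing sequence, weak-$\ast$ compactness of controls via Banach--Alaoglu, equi-Lipschitz trajectories via Ascoli--Arzel\`a, passage to the limit in the control-affine dynamics, and lower semicontinuity of the cost---so the two proofs coincide in substance.

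One remark worth making: you correctly flag that the lower-semicontinuity step (and indeed Filippov's theorem itself) requires a convexity hypothesis on the extended velocity set, or at least convexity of $\scrL$ in $u$, which the bare statement of the theorem does not include. The paper does not address this either, simply deferring to \cite{agrachev}; your observation that the two concrete Lagrangians actually used later (the constant $\scrL\equiv 1$ and the power expended, quadratic in $u$) do satisfy this hypothesis is a genuine clarification that the paper leaves implicit.
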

\begin{proof}
By Theorem~\ref{th:contr}, the system is fiber controllable with bounded controls. 
It suffices to apply the ideas of Filippov Theorem, see \cite{agrachev}.
\end{proof}
We now discuss the solution to some specific optimal control problems.

The \emph{minimal time optimal control problem} for the $2$-link swimmer can be written as: given $g_0,g_1\in G$, solve
\begin{equation}\label{mtOCP}
\begin{cases}
\inf \big\{t_f: (u_1,u_2)\in[a,b]^2\big\}, \\
\text{\eqref{000_1} holds for every $t\in[0,t_f]$,} \\
(u_1(t),u_2(t))\in [a,b]^2,\;\text{for every $t\in[0,t_f]$,} \\
g(0)=g_0, g(t_f)=g_1.
\end{cases}
\end{equation}
\begin{theorem}\label{th:OCP}
For any $g_0,g_1\in SE(3)$, there exists a unique solution to \eqref{mtOCP}, namely there exist $\bar t_f\in\R{}$ and bounded controls $(\bar u_1,\bar u_2)\colon[0,\bar t_f]\to[a,b]^2$ of bang-bang type such that the infimum in \eqref{mtOCP} is attained at $\bar t_f$ for $(\bar u_1,\bar u_2)$.
\end{theorem}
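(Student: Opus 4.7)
The plan is to split the argument into two parts: existence of a minimising pair via a Filippov-type compactness argument, and bang-bang characterisation of the optimal controls via the Pontryagin Maximum Principle applied to the driftless control-affine system obtained in Section~\ref{sec:dyn}.

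First I would establish existence by adapting the strategy of Theorem~\ref{thm:opt}. By Theorem~\ref{th:contr} the admissible set in \eqref{mtOCP} is nonempty, so $\bar t_f \coloneqq \inf\{t_f\}$ is a finite nonnegative number. Take a minimising sequence $\{(t_f^n, u_1^n, u_2^n)\}$ with associated trajectories $(g^n,\vartheta^n,\varphi^n)$, rescale time to the common interval $[0,1]$ via $t = \tau t_f^n$, and extract a weak-$*$ limit of the reparametrised controls in $L^\infty(0,1;[a,b]^2)$. Since the right-hand side of \eqref{000_1} is affine in the controls and $[a,b]^2$ is convex and compact, the admissible velocity multifunction is convex-valued; combined with Arzelà--Ascoli applied to the equi-Lipschitz trajectories, the limit quadruple solves \eqref{000_1}, satisfies the boundary conditions $g(0)=g_0$ and $g(\bar t_f)=g_1$ by continuity, and realises the infimum.

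Next I would invoke the Pontryagin Maximum Principle for the minimum-time problem applied to $\dot z = V_1(s)u_1 + V_2(s)u_2$. The pre-Hamiltonian $H(z,p,u) = \langle p, V_1(s)u_1 + V_2(s)u_2\rangle$ is linear in $(u_1,u_2)$, so the maximisation condition forces $\bar u_i(t)=b$ wherever the switching function $\sigma_i(t) \coloneqq \langle \bar p(t), V_i(\bar s(t))\rangle$ is positive and $\bar u_i(t)=a$ wherever $\sigma_i(t)$ is negative. Since $V_1$ and $V_2$ are analytic in $s$ by \eqref{015}--\eqref{016N} and the adjoint state is absolutely continuous and satisfies an analytic ODE along the absolutely continuous extremal shape curve $\bar s$, each $\sigma_i$ is analytic in time; so either $\sigma_i$ vanishes only on a discrete set, producing a bang-bang control with finitely many switches on $[0,\bar t_f]$, or $\sigma_i \equiv 0$ on some subinterval, producing a singular arc.

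The hard part will be ruling out singular arcs, which is the only obstruction to the bang-bang conclusion. The standard recipe is to differentiate $\sigma_i$ iteratively along the extremal flow: on a singular subinterval all of $\langle\bar p,V_i\rangle$, $\langle\bar p,[V_i,V_j]\rangle$, and higher iterated bracket pairings must vanish identically. But Step~2 of the proof of Theorem~\ref{th:contr} exhibits six iterated Lie brackets of $V_1, V_2$ whose $SE(3)$-components $v_1,\dots,v_6$ span $\fse(3)$ at the shape $(\vartheta,\varphi) = (\tfrac{\pi}{2},0)$ for almost every choice of $(C_\parallel,C_\perp,C_\tau,L)$; by analytic propagation along the extremal, the same spanning persists on an open dense set of shapes, so a non-trivial adjoint cannot annihilate all iterated brackets on a subinterval, contradicting the non-triviality clause of PMP. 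This rules out singular arcs and yields the bang-bang property almost everywhere. Uniqueness of $\bar t_f$ is automatic from the definition of the infimum; uniqueness of the optimal controls in the stronger sense reduces to the generic separation of switching times, which follows from analyticity of the extremal flow together with continuous dependence of switchings on the boundary data.
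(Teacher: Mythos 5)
Your existence argument is sound and coincides with the paper's route: the paper obtains existence by taking $\scrL\equiv 1$ in Theorem~\ref{thm:opt}, which itself rests on a Filippov-type compactness argument, essentially the rescaling/weak-$*$/convexity reasoning you spell out. The genuine gaps are in the two delicate points, where the paper defers to the literature (bang-bang structure from a standard application of the Pontryagin Maximum Principle, uniqueness by adapting \cite[Theorem~15.3]{agrachev}) while your explicit arguments do not go through as written. For the exclusion of singular arcs: on a singular interval you only know that the relevant switching functions and their time derivatives along the extremal vanish, and for the driftless system $\dot z=u_1V_1(s)+u_2V_2(s)$ these derivatives are \emph{control-weighted combinations} of bracket pairings, e.g.\ $\dot\sigma_1=u_2\langle p,[V_2,V_1]\rangle$ and $\frac{\de}{\de t}\langle p,[V_1,V_2]\rangle=u_1\langle p,[V_1,[V_1,V_2]]\rangle+u_2\langle p,[V_2,[V_1,V_2]]\rangle$. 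So you cannot conclude that the adjoint annihilates each of the six brackets $v_1,\dots,v_6$ individually (the chain moreover breaks when a control vanishes, which is possible if $0\in[a,b]$, and a partially singular arc with only one $\sigma_i\equiv 0$ is not covered at all). Hence the claimed contradiction with Step~2 of Theorem~\ref{th:contr} is not established. In addition, the spanning of $\fse(3)$ is proved at the shape $(\frac\pi2,0)$ and, by analyticity of the determinant \eqref{020}, holds off a closed null set of shapes; but nothing in your argument prevents a singular extremal from remaining inside that exceptional set, so ``analytic propagation along the extremal'' does not by itself close this case.

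The uniqueness part is also unsupported. Uniqueness of $\bar t_f$ is indeed trivial, but the theorem asserts uniqueness of the optimal solution, and ``generic separation of switching times'' together with ``continuous dependence of switchings on the boundary data'' is not an argument: time-optimal controls of analytic control-affine systems need not be unique, and a genericity statement says nothing about the fixed pair $(g_0,g_1)$ appearing in the statement. The paper settles this point by adapting the uniqueness argument of \cite[Theorem~15.3]{agrachev}, which exploits the bang-bang structure of time-optimal controls; some such argument (or an explicit citation playing its role) is needed to complete your proof.
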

\begin{proof}
By taking $\scrL\equiv1$, Theorem~\ref{thm:opt} provides the existence of a solution to \eqref{mtOCP}.
Uniqueness follows from an adaptation of the proof of \cite[Theorem~15.3]{agrachev}.
A standard application of the Pontryagin Maximum Principle to \eqref{mtOCP} leads to obtaining that $(\bar u_1,\bar u_2)$ are of bang-bang type.
\end{proof}

We now turn to the \emph{optimal control for the power expended}.
Let us recall that, for a motion defined on the fixed time interval $[0,t_f]$, the power expended is defined as the scalar product of the force against the velocity, namely 
\begin{equation*}
\cP\coloneqq \sum_{i=1}^N \int_0^{t_f} \int_0^{\ell_i} \Big[\scp{\bef_t^{(i)}(s)}{\dot \bx_t^{(i)}(s)}+\scp{\btau_t^{(i)}(s)}{\bomega_t}\Big]\de s\de t.
\end{equation*}
Taking \eqref{004} and \eqref{005} into account, the power for the $2$-link swimmer analysed in previous Sections~\ref{sec:dyn} and~\ref{sec:controllability} ($N=2$ and $\ell_i=L$ for $i=1,2$) reads
\begin{equation*}
\begin{split}
\cP= & \int_0^{t_f} \int_0^{L} \Big[\scp{\bef_t^{(1)}(s)}{\dot \bx_t^{(1)}(s)}+\scp{\bef_t^{(2)}(s)}{\dot \bx_t^{(2)}(s)}+\scp{\btau_t^{(1)}(s)+\btau_t^{(2)}(s)}{\bomega_t}\Big] \de s\de t \\
= & \int_0^{t_f} \bigg[\frac{L^3 C_\perp \big(4C_\parallel + C_\perp +(4C_\parallel-C_\perp)\cos\varphi_t\big)}{24\big(C_\parallel+C_\perp+(C_\parallel-C_\perp) \cos\varphi_t\big)} \dot\varphi_t^2\\
&\phantom{\int_0^{t_f}\bigg[ } +\frac{12 C_\tau ^2 C_\perp  L^3 \sin ^2\varphi_t (5 (\cos (2 \varphi_t )+3)-12 \cos\varphi_t)}{\left(-36 C_\tau  \cos \varphi_t+45 C_\tau +\cos (2 \varphi_t ) \left(15 C_\tau -2 C_\perp  L^2\right)+2 C_\perp 
   L^2\right)^2}\dot\vartheta^2\bigg]\,\de t.
\end{split}
\end{equation*}
The power expended $\cP$ is expected to be a function of the shape parameters and of their velocities and in particular it is quadratic in the velocities.

Recalling that we posed $u_1=\dot\varphi$ and $u_2=\dot\vartheta$ (see \eqref{015} and \eqref{016N}), the optimal control problem for the power expended can be cast in the form \eqref{OCP2} by taking 
$$
\begin{aligned}\scrL(g,\vartheta,\varphi,u_1,u_2)=&\scrL_\cP(\varphi,u_1,u_2)\coloneqq  \frac{L^3 C_\perp \big(4C_\parallel + C_\perp +(4C_\parallel-C_\perp)\cos\varphi\big)}{24\big(C_\parallel+C_\perp+(C_\parallel-C_\perp) \cos\varphi\big)} u_1^2\\
&+\frac{12 C_\tau ^2 C_\perp  L^3 \sin ^2\varphi (5 (\cos (2 \varphi )+3)-12 \cos (\varphi ))}{\left(-36 C_\tau  \cos\varphi+45 C_\tau +\cos (2 \varphi ) \left(15 C_\tau -2 C_\perp  L^2\right)+2 C_\perp  L^2\right)^2}u_2^2\,,
\end{aligned}
$$
namely
\begin{equation}\label{OCPPE}
\begin{cases}
\displaystyle \inf \bigg\{\int_0^{t_f} \scrL_\cP(\varphi(t),u_1(t),u_2(t))\,\de t \bigg\}, \\
\text{$(u_1(t),u_2(t))\in [a,b]^2$ for every $t\in[0,t_f]$,} \\
\text{\eqref{000_1} holds for every $t\in[0,t_f]$,} \\
g(0)=g_0, g(t_f)=g_1.
\end{cases}
\end{equation}
\begin{theorem}\label{thm:PE}
Given $t_f>0$, for any $g_0,g_1\in SE(3)$, there exists a unique solution to \eqref{OCPPE}, namely there exist an 
absolutely continuous trajectory $\bar g\colon[0,t_f]\to SE(3)$, 
absolutely continuous shape changes $(\bar\vartheta,\bar\varphi)\colon[0,t_f]\to \R{}\times \T$, and 
bounded controls $\bar u_1\colon[0,t_f]\to [a,b]$,  $\bar u_2\colon[0,t_f]\to [a,b]$, either continuous or of bang-bang type
such that 
\begin{equation*}
 \inf \bigg\{\int_0^{t_f} \scrL_\cP(\varphi(t),u_1(t),u_2(t))\,\de t \bigg\}=\int_0^{t_f} \scrL_\cP(\bar\varphi(t),\bar u_1(t),\bar u_2(t))\,\de t,
\end{equation*}
$(\bar u_1(t),\bar u_2(t))\in [a,b]^2$ for every $t\in[0,t_f]$, \eqref{000_1} holds for every $t\in[0,t_f]$, and $\bar g(0)=g_0$, $\bar g(t_f)=g_1$.
\end{theorem}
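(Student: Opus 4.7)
The plan is to combine three ingredients: existence via the general Theorem~\ref{thm:opt}, uniqueness via strict convexity of $\scrL_\cP$ in the controls, and the qualitative dichotomy between continuous and bang-bang behaviour via the Pontryagin Maximum Principle.

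First I would verify that $\scrL_\cP$ fits the hypotheses of Theorem~\ref{thm:opt}. The integrand is quadratic in $(u_1,u_2)$ and smooth in $\varphi$, once one checks that the two denominators are bounded away from zero: the first rewrites as $C_\parallel(1+\cos\varphi)+C_\perp(1-\cos\varphi)$, strictly positive under the standing assumption $C_\parallel,C_\perp>0$, while the second coincides (up to sign) with the denominator appearing in $V_2$ of \eqref{016N}, non-zero because $\tilde{\cM}_t$ is positive definite. A direct application of Theorem~\ref{thm:opt} with $\scrL=\scrL_\cP$ then delivers an optimal triple $(\bar g,\bar\vartheta,\bar\varphi,\bar u_1,\bar u_2)$.

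For uniqueness I would exploit strict convexity of $\scrL_\cP$ in $(u_1,u_2)$. The coefficient of $u_1^2$ has numerator $4C_\parallel(1+\cos\varphi)+C_\perp(1-\cos\varphi)$, hence is uniformly bounded below by a positive constant, while the coefficient of $u_2^2$ vanishes only on the discrete set $\{\varphi\in\{0,\pi\}\}$. Two distinct minimizers would, upon convex combination, produce a strictly smaller cost on any time set where $\sin\varphi\neq 0$; and a trajectory confined to $\sin\varphi=0$ cannot meet arbitrary endpoint conditions in $SE(3)$ by fiber controllability (Theorem~\ref{th:contr}). Uniqueness then follows along the lines of \cite[Theorem~15.3]{agrachev}.

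To obtain the structural description I would apply the Pontryagin Maximum Principle to the Hamiltonian
\[
H(g,s,p,u_1,u_2)=\scp{p}{V_1(s)u_1+V_2(s)u_2}-\lambda\,\scrL_\cP(\varphi,u_1,u_2),\qquad \lambda\geq 0.
\]
In the normal case $\lambda>0$, $H$ is concave quadratic and separable in $(u_1,u_2)$, so the pointwise maximization gives, for each $i=1,2$ independently, either the smooth interior critical value $\bar u_i(t)=\scp{p(t)}{V_i(s(t))}/(2\lambda\,\alpha_i(\varphi(t)))$, where $\alpha_i(\varphi)$ is the coefficient of $u_i^2$ in $\scrL_\cP$, or the clipped boundary value $a$ or $b$ on the subintervals where that expression escapes $[a,b]$, producing bang-bang behaviour there. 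The main obstacle I anticipate is ruling out the abnormal multiplier $\lambda=0$ and handling the measure of switching times between the continuous and bang-bang regimes: the first will be addressed through the fiber controllability of Theorem~\ref{th:contr}, while the second relies on the analyticity of $V_1,V_2$ (cf.\ the proof of Theorem~\ref{th:contr}) together with the non-degeneracy of the quadratic form in the controls ensured by the denominator analysis of the first step.
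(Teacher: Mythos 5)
Your proposal follows essentially the same route as the paper's proof: existence by applying Theorem~\ref{thm:opt} with $\scrL=\scrL_\cP$, uniqueness attributed to the strict convexity of $\scrL_\cP$ in $(u_1,u_2)$, and the continuous versus bang-bang dichotomy obtained by stationarizing the Pontryagin Hamiltonian in the controls, exactly as in the paper's footnote. The additional checks you supply (positivity of the denominators, the degenerate set $\sin\varphi=0$, the abnormal multiplier) elaborate on points the paper leaves implicit, but the skeleton of the argument is the same.
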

\begin{proof}
Theorem~\ref{thm:opt} provides the existence of a solution to \eqref{OCPPE}.
Uniqueness of 
$\bar u_1$ is implied by the strict convexity of $\scrL_\cP$ with respect to $u_1$ and $u_2$.
The regularity of $\bar u_1$ and $\bar u_2$ is a consequence of a standard application of the Pontryagin Maximum Principle\footnote{By stationarizing the Hamiltonian of the Pontryagin Maximum Principle with respect to $u_1$ and $u_2$, we obtain the control $\bar u_1$ and $\bar{u}_2$ together with their regularity: the functions $t\mapsto \bar u_1(t)$, $t\mapsto \bar u_2(t)$ are continuous if the stationary point belongs to $[a,b]^2$ for all $t\in[0,t_f]$; otherwise, it is of bang-bang type.}.
\end{proof}

\section{Conclusions and outlook}\label{sec:conclusions}

In this paper we studied the dynamics, controllability and optimal control problems for a $2$-link swimmer capable of performing fully three-dimensional shape changes.
In Section~\ref{sec:dyn}, we described the configuration and shape of the swimmer and derived the equations of motion of the $2$-link swimmer in a low Reynolds number flow by means of Resistive Force Theory and enforcing the so-called \emph{self-propulsion constraint} (setting the viscous force and torque equal to zero, see \eqref{012} and \eqref{014}).
Theorem~\ref{th:exuni} states the existence and uniqueness of the solution to the equations of motion \eqref{014}.
It is derived directly from Theorem~\ref{th:contr}, which is the main result of the paper and the core of Section~\ref{sec:controllability}.
The proof of Theorem~\ref{th:contr} is achieved by applying techniques from Geometric Control Theory.

In Section~\ref{sec:ext} we extended the results to the case of a general, fully three-dimensional $N$-link swimmer, exploiting the analyticity of the vector fields governing the dynamics.
Finally, in Section~\ref{sec:optimality}, we addressed two specific optimal control problems for the $2$-link swimmer, namely the minimal time optimal control problem and the minimisation of the power expended.
Both problems have an independent interest and find their relevance in the design of artificial micro-devices which mimic the motion of natural micro-organisms.

\smallskip

The results obtained in this paper focus on the self-propulsion case, as it is the first step towards the design of self-propelling micro-robots.
Nonetheless, it can be interesting for the applications, and object of future work, to extend the study to externally driven micro-swimmers.
This direction has already been pursued in the case of two-dimensional magneto-elastic swimmers: in \cite{MartaSORO,ADSGZ2017} a planar $N$-link is studied, showing that it can achieve a non-zero net displacement when actuated by a sinusoidal external magnetic field; in \cite{GP2015} local controllability of a $2$-link magneto-elastic swimmer is proved, wheres in \cite{Giraldi3d} the actuation of a three-dimensional $N$-link swimmer  by an external magnetic field is studied. Finally, we mention that the case of a multi-flagellar swimmer is studied in \cite{YRS}.
Imposing an external actuating field on the one hand has the benefit of \emph{helping} the swimmer to move and simplifying its design from the engineering point of view, while on the other hand makes the problem more challenging from the mathematical point of view.  

\subsection*{Acknowledgments} 
RM developed part of this work (Sections~\ref{sec:dyn} and~\ref{sec:ext}) as part of his B.Sc.~thesis under the supervision of MM.
HS thanks the hospitality of the department of mathematics of the Politecnico di Torino and gratefully acknowledges partial support from the MIUR grant Dipartimenti di Eccellenza 2018-2022 (CUP: E11G18000350001) and support of the Natural Sciences and Engineering Research Council of Canada (NSERC), [funding reference number RGPIN-2018-04418].
Cette recherche a \'{e}t\'{e} financ\'{e}e par le Conseil de recherches en sciences naturelles et en g\'{e}nie du Canada (CRSNG), [num\'{e}ro de r\'{e}f\'{e}rence RGPIN-2018-04418].
MM is a member of the Gruppo Nazionale per l'Analisi Matematica, la Probabi\-lit\`a e le loro Applicazioni (GNAMPA) of the Istituto Nazionale di Alta Matematica (INdAM).
MZ is a member of the Gruppo Nazionale per la Fisica Matematica (GNFM) of the Istituto Nazionale di Alta Matematica (INdAM).
Both MM and MZ gratefully acknowledge support from the MIUR grant Dipartimenti di Eccellenza 2018-2022 (CUP: E11G18000350001).

\end{document}